\documentclass[preprint,12pt]{elsarticle}

\usepackage{amsmath, amssymb, amsthm}
\usepackage{graphicx}
\usepackage{hyperref}
\usepackage{url}
\usepackage{enumerate}
\usepackage{mathrsfs}
\usepackage{tikz}
\usepackage{subcaption}

\usepackage{amsmath, amssymb, amsthm}
\usepackage{graphicx}
\usepackage{hyperref}
\usepackage{url}
\usepackage{enumerate}
\usepackage{mathrsfs}
\usepackage{tikz}
\usepackage{subcaption}
\usepackage{float}
\usepackage{enumitem}
\usetikzlibrary{positioning}
\usepackage{caption}
\usepackage{nicematrix,tikz}
\usepackage{tikz}
\usetikzlibrary{decorations.pathreplacing,positioning}
\newtheorem{theorem}{Theorem}[section]
\newtheorem{corollary}[theorem]{Corollary}

\newtheorem{lemma}[theorem]{Lemma}
\newtheorem{definition}[theorem]{Definition}
\newtheorem{remark}[theorem]{Remark}
\newcommand{\npos}[1]{n_{+}\!\left(#1\right)}
\newcommand{\nneg}[1]{n_{-}\!\left(#1\right)}
\newcommand{\nzero}[1]{n_{0}\!\left(#1\right)}
\journal{}

\begin{document}

\begin{frontmatter} 

\title{Distribution of signless Laplacian eigenvalues and
degree sequence}
\author[inst1]{S. Akbari}
\ead{s_akbari@sharif.edu}

\author[inst3]{M. Darougheh}
\ead{mostafadarougheh@gmail.com}

\author[inst2]{L. S. de Lima}
\ead{leonardo.delima@ufpr.br}

\author[inst2]{D. Traciná \corref{cor1}}
\ead{danieltracina@ufpr.br}

\cortext[cor1]{Autor correspondente}
\address[inst1]{Department of Mathematical Sciences, Sharif University of Technology}
\address[inst2]{Programa de Pós-Graduação em Matemática, Universidade Federal do Paraná, Curitiba, Brazil}
\address[inst3]{Department of Mathematics and Computer Science, University of Science and Technology}

\begin{abstract}
Let $G$ be a graph of order $n$ with degree sequence $d_1 \geq \cdots \geq d_{n}$. Let $m_{G} \emph{I}$ be the number of signless Laplacian eigenvalues in an interval $\emph{I}.$ In this paper, we characterize the distribution of the signless Laplacian eigenvalues in terms of the degree sequence of a graph within specific subintervals of $[0, \, 2n-2].$ We determine all graphs $G$ such that $m_{G}[d_n, 2n-2] \leq 2, \; m_{G}[d_{n-1}, 2n-2] = 1, \; m_{G}[0, d_1] \le 2.$ We also prove that there is no graph such that $m_{G}[0, d_3]=1$. In addition, we obtain all disconnected graphs such that $m_{G}[0, d_1] = 3$. Finally, we propose two open problems for future research.
\end{abstract}

\begin{keyword}
Signless Laplacian\sep Eigenvalues distribution \sep Degree sequence
\MSC[2020] 05C07 \sep 05C50 \sep 15A18 
\end{keyword}

\end{frontmatter}

\section{Introduction}
Let $G$ be a simple graph of order $n$, with the vertex set $V(G)$ and edge set $E(G)$. The degree sequence of $G$ is denoted by $d_1 \geq \cdots \geq d_n$. We write $A(G)$ for the adjacency matrix of $G$, and $D(G)$ for the diagonal matrix of vertex degrees. The matrix $Q(G) = A(G) + D(G)$ is called the \textit{signless Laplacian} of $G$, and its eigenvalues are denoted by $q_1(G) \geq \cdots \geq q_n(G)$. It is well known that the signless Laplacian matrix $Q(G)$ is positive semidefinite, so all its eigenvalues are non-negative. Furthermore, Gershgorin's circle theorem implies that $q_1(G) \le 2n-2$. Thus the signless Laplacian spectrum of any graph of order $n$ is contained in the interval $[0,\, 2n-2]$.

An extensive body of research has investigated how Laplacian eigenvalues are distributed within prescribed intervals; see, for example, \cite{ahanjideh, darougheh1, darougheh2, Cardoso2017, Guo2011, HEDETNIEMI201666, WANG2020307, XU202392}. These works establish connections between the placement of Laplacian eigenvalues and various structural characteristics of the underlying graph, including the domination number, independence number, vertex connectivity, diameter, and degree sequence. Recently, similar works have focused on the distribution of the eigenvalues of the signless Laplacian matrix within a given subinterval of $[0,\, 2n-2]$; see, for instance, \cite{xu2024distribution, ghodrati, belardocircunferencia, Lin2015}. For convenience, we denote by $m_GI$, where $I \subseteq [0,\, 2n-2]$, the number of signless Laplacian eigenvalues of a graph $G$ of order $n$ that lie in $I$ (counted with multiplicities). 

In this paper, we address the distribution of the signless Laplacian eigenvalues in terms of the degree sequence of a graph within specific subintervals of $[0, \, 2n-2].$ 

The paper is organized as follows. In Section~2, we state several known results from the literature that will be used in our proofs. In Section~3, we characterize all graphs satisfying $m_G[d_n,\, 2n-2] \le 2$, $m_G[d_{n-1},\, 2n-2] = 1$, and $m_G[0,\, d_1] \le 2$. For disconnected graphs, we also determine all graphs for which $m_G[0,\, d_1] = 3$. In addition, we prove that there is no graph $G$ such that $m_G[0,\, d_3] = 1$, along with several related auxiliary results. Section 4 contains our concluding remarks, in which we propose two problems.


\section{Preliminaries}

In this section, we introduce the notation and basic concepts used throughout the paper.
For a graph $G$ and a vertex $v \in V(G)$, the \emph{open neighborhood} of $v$ is
$N_G(v) = \{u \in V(G) : uv \in E(G)\}$, and the \emph{closed neighborhood} of $v$ is
$N_G[v] = N_G(v) \cup \{v\}$. The \emph{degree} of $v$ is $d_G(v) = |N_G(v)|$.
A set $S \subseteq V(G)$ is \emph{independent} if no two vertices of $S$ are adjacent.
The \emph{independence number} of $G$ is
$\alpha(G) = \max\{|S| : S \subseteq V(G)\ \text{is independent}\}$.

We denote by $P_n$, $C_n$, $S_n$, and $K_n$ the path, cycle, star, and complete graph
of order $n$, respectively. For a partition $n_1,\dots,n_k$ of $n$, we write
$K_{n_1,\dots,n_k}$ for the complete $k$-partite graph with part sizes $n_1,\dots,n_k$.

Given two graphs $G$ and $H$, their disjoint union is denoted by $G \cup H$.
If $G$ is a graph and $k \ge 1$, the notation $kG$ stands for the disjoint union of $k$ copies of $G$.
The \emph{join} of $G$ and $H$, denoted by $G \vee H$, is obtained from $G \cup H$ by adding all edges between $V(G)$ and $V(H)$.

For an $n \times n$ real symmetric matrix $M$, we write $ \operatorname{Spec}(M)$ for the multiset of its eigenvalues. We denote the multiplicity of an eigenvalue $\lambda$ of $M$ by ${\lambda ^{\left[ {{m}} \right]}}$, where $m$ is the multiplicity of $\lambda $.
In particular, the signless Laplacian spectrum of $G$ is
$ \operatorname{Spec}(G) = \operatorname{Spec}(Q(G))$ and the eigenvalues of \(A(G)\) will be denoted by
\(\lambda_1(G) \geq \cdots \geq \lambda_n(G)\). 

\begin{lemma}[\cite{Cvetkovic2010}, Theorem 1.3.15] \label{lema:Weyl}
Let \( A \) and \( B \) be \( n \times n \) Hermitian matrices. Suppose that \( \lambda_1(A) \geq \cdots \geq \lambda_n(A) \) and \( \lambda_1(B) \geq \cdots \geq \lambda_n(B) \) are eigenvalues of \( A \) and \( B \), respectively. Then
\[
\lambda_i (A + B) \leq \lambda_j (A) + \lambda_{i-j+1} (B) \quad (1 \leq j \leq i \leq n),
\]
\[
\lambda_i (A + B) \geq \lambda_j (A) + \lambda_{i-j+n} (B) \quad (1 \leq i \leq j \leq n).
\]
\end{lemma}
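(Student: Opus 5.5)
The plan is to prove both inequalities from the Courant--Fischer min-max characterization of the eigenvalues of a Hermitian matrix, comparing dimensions of subspaces.
\[
\lambda_k(M)=\max_{\dim U=k}\ \min_{0\neq x\in U}\frac{x^*Mx}{x^*x}=\min_{\dim W=n-k+1}\ \max_{0\neq x\in W}\frac{x^*Mx}{x^*x}.
\]

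For the first inequality, fix $1\le j\le i\le n$. Let $u_1,\dots,u_n$ and $v_1,\dots,v_n$ be orthonormal eigenvectors of $A$ and $B$ for their eigenvalues in decreasing order. Set
\[
W_A=\operatorname{span}\{u_j,\dots,u_n\},\qquad W_B=\operatorname{span}\{v_{i-j+1},\dots,v_n\}.
\]
These have dimensions $n-j+1$ and $n-i+j$. Let $U$ be the span of eigenvectors of $A+B$ for $\lambda_1(A+B),\dots,\lambda_i(A+B)$, so $\dim U=i$.

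The sum of the three dimensions is $(n-j+1)+(n-i+j)+i=2n+1$. The intersection bound $\dim(X\cap Y\cap Z)\ge \dim X+\dim Y+\dim Z-2n$ then gives a unit vector $x\in W_A\cap W_B\cap U$. For this $x$ we have $x^*Ax\le\lambda_j(A)$ and $x^*Bx\le \lambda_{i-j+1}(B)$. We also have $x^*(A+B)x\ge \lambda_i(A+B)$. Adding the first two bounds and combining with the third yields the first inequality.

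The second inequality follows by applying the first to $-A$ and $-B$. Note that $\lambda_k(-M)=-\lambda_{n-k+1}(M)$. Given $1\le i\le j\le n$, put $i'=n-i+1$ and $j'=n-j+1$. Then $j'\le i'$, and the first inequality gives
\[
-\lambda_{i}(A+B)\le -\lambda_{j}(A)-\lambda_{n-(i'-j'+1)+1}(B).
\]
Here $n-(i'-j'+1)+1=n+i-j$, which is the index $i-j+n$ in the statement. Negating gives the claim.

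The main point needing care is the dimension count and the index bookkeeping in the reflection step. The rest is routine, since this is the classical Weyl inequality.
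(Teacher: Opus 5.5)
Your proof is correct. The dimension count $(n-j+1)+(n-i+j)+i=2n+1$ forces a common unit vector in the three subspaces, and the reflection $M\mapsto -M$ with $\lambda_k(-M)=-\lambda_{n-k+1}(M)$ produces exactly the index $i-j+n$. The paper gives no proof of this lemma and only cites \cite{Cvetkovic2010}; your argument is the standard Courant--Fischer subspace-intersection proof of the Courant--Weyl inequalities, which, as far as I recall, is the argument used in that reference.
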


\begin{lemma}[\cite{Horn2012}, Theorem 4.3.28]\label{lema: principal submatrix}
If $M$ is a Hermitian matrix of order $n$ and $B$ is its principal submatrix of order $p$, then $\lambda_{n-p+i}(M) \leq \lambda_i(B) \leq \lambda_i(M) $ for $1 \leq i \leq p$.
\end{lemma}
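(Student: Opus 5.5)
This is the Cauchy interlacing theorem for a principal submatrix; I would derive it from the Courant--Fischer min--max characterization. For a Hermitian $M$ of order $n$ with eigenvalues $\lambda_1(M)\ge\cdots\ge\lambda_n(M)$,
\[
\lambda_k(M)=\max_{\dim U=k}\ \min_{0\ne x\in U}\frac{x^*Mx}{x^*x}=\min_{\dim W=n-k+1}\ \max_{0\ne x\in W}\frac{x^*Mx}{x^*x}.
\]
After a permutation similarity, which changes no eigenvalues, we may assume $B$ is the leading $p\times p$ block of $M$. Let $E\subseteq\mathbb{C}^n$ be the $p$-dimensional coordinate subspace of vectors supported on the first $p$ coordinates. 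For $y\in\mathbb{C}^p$, write $\hat y=(y,0)\in E$. Then $\hat y^*M\hat y=y^*By$, so Rayleigh quotients of $B$ are exactly the Rayleigh quotients of $M$ restricted to $E$.

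For the upper bound $\lambda_i(B)\le\lambda_i(M)$, take the max--min form for $B$. It is a maximum over $i$-dimensional subspaces $U\subseteq\mathbb{C}^p$. Each such $U$ embeds as an $i$-dimensional subspace $\hat U\subseteq E\subseteq\mathbb{C}^n$ with the same Rayleigh quotients. Hence $\lambda_i(B)$ is a maximum over a subfamily of the $i$-dimensional subspaces used for $\lambda_i(M)$, which gives $\lambda_i(B)\le\lambda_i(M)$.

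For the lower bound $\lambda_{n-p+i}(M)\le\lambda_i(B)$, use the min--max form. For $B$ we have $\lambda_i(B)=\min_{\dim W=p-i+1}\max_{x\in W}R_B(x)$ with $W\subseteq\mathbb{C}^p$. Each such $W$ embeds as $\hat W\subseteq\mathbb{C}^n$ of dimension $p-i+1=n-(n-p+i)+1$, which is exactly the dimension used for $\lambda_{n-p+i}(M)$. A minimum over the smaller family is at least the minimum over all such subspaces, so $\lambda_i(B)\ge\lambda_{n-p+i}(M)$.

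An alternative route is induction on $n-p$: delete one row and column at a time and apply the one-step interlacing $\lambda_{k+1}(M)\le\lambda_k(M')\le\lambda_k(M)$, chaining the inequalities. The direct min--max argument above avoids that bookkeeping. The only real obstacle is the index arithmetic in the lower bound, namely checking that $\dim \hat W=p-i+1$ matches $n-k+1$ exactly when $k=n-p+i$. The rest is immediate once Courant--Fischer is taken as known.
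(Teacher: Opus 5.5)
Your argument is correct and complete. It is the standard Courant--Fischer proof of Cauchy interlacing, and your index check is right: a subspace of dimension $p-i+1$ is exactly what the min--max formula for $\lambda_k(M)$ uses when $k=n-p+i$.

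The paper gives no proof of this lemma; it simply quotes it from Horn and Johnson. So there is no competing argument to compare against.

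Of the two routes you mention, the direct min--max one has a concrete advantage. It uses only that $y\mapsto\hat y$ is an isometry preserving the quadratic form. Hence it proves, verbatim, the more general statement for compressions $V^*MV$ with $V$ any $n\times p$ matrix with orthonormal columns (the Poincar\'e separation theorem). The induction route instead needs only the one-step bordered case, chained $n-p$ times.
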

The following fact is an immediate consequence of Weyl’s inequalities (Lemma~\ref{lema:Weyl}) and Cauchy’s interlacing theorem. We therefore omit the proof.

\begin{corollary}\label{cor: subgrafo induzido}
Let $G$ be a graph of order $n$, and let $H$ be an induced subgraph of $G$. Then, for every $a \in [0,\,2n-2]$,
\[
m_H[a,\,2n-2] \;\leq\; m_G[a,\,2n-2].
\] 
\end{corollary}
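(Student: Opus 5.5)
Let $H$ be induced on a vertex set $S\subseteq V(G)$ with $|S|=p$. The plan is to show that $q_i(H)\le q_i(G)$ for every $1\le i\le p$. Once this holds, the corollary follows by counting. Indeed, $m_H[a,2n-2]$ is the number of indices $i$ with $q_i(H)\ge a$. Since the eigenvalues of $H$ are listed in nonincreasing order, these indices form an initial segment $\{1,\dots,k\}$. Each $q_i(G)$ with $i\le k$ then satisfies $q_i(G)\ge q_i(H)\ge a$, and $q_i(G)\le 2n-2$ holds by the Gershgorin bound recalled in the introduction. Hence $m_G[a,2n-2]\ge k$. The bound $q_i(H)\le 2(p-1)\le 2n-2$ also ensures that the eigenvalues of $H$ lie in the interval, so the count for $H$ is well defined.

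To compare eigenvalues, let $B$ be the principal submatrix of $Q(G)$ indexed by $S$. For $v\in S$ we have $d_G(v)=d_H(v)+e_v$, where $e_v$ is the number of neighbours of $v$ in $V(G)\setminus S$. The off-diagonal entries of $B$ coincide with those of $A(H)$ because $H$ is induced. Therefore
\[
B=Q(H)+E,\qquad E=\operatorname{diag}(e_v)_{v\in S},
\]
and $E$ is positive semidefinite with $\lambda_p(E)=\min_v e_v\ge 0$.

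Applying the second inequality of Lemma~\ref{lema:Weyl} with $j=i$ gives
\[
\lambda_i(B)\ge \lambda_i(Q(H))+\lambda_p(E)\ge q_i(H).
\]
Applying the right-hand inequality of Lemma~\ref{lema: principal submatrix} to $M=Q(G)$ gives $\lambda_i(B)\le q_i(G)$. Chaining the two yields $q_i(H)\le q_i(G)$ for $1\le i\le p$, which completes the argument.

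I do not expect a real obstacle. The only point needing care is that $Q(H)$ is \emph{not} itself a principal submatrix of $Q(G)$, because the diagonal entries carry the degrees in $G$. This is why the nonnegative diagonal correction $E$ and Weyl's inequality are needed in addition to interlacing, and why the statement requires $H$ to be induced.
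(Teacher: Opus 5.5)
Your proof is correct and follows the route the paper indicates; the paper omits the proof and only says the corollary is an immediate consequence of Weyl's inequalities and Cauchy interlacing. Like that sketch, you write the principal submatrix as $Q(H)+E$ with $E$ a nonnegative diagonal matrix, use Weyl to get $\lambda_i(B)\ge q_i(H)$ and interlacing to get $\lambda_i(B)\le q_i(G)$, and then count.
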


Given a graph \(G\) and a set of edges \(S \subseteq E(G)\), we denote by \(G - S\) the subgraph obtained from \(G\) by deleting all edges in \(S\). In particular, when \(S = \{e\}\) for some edge \(e\), we simply write \(G - e\). Conversely, if a graph \(G'\) satisfies \(G' = G - S\) for some \(S \subseteq E(G)\), then we may recover \(G\) from \(G'\) by adding back the edges of \(S\); in this case we write \(G = G' + S\).

\begin{lemma}[\cite{cvetkovic2007}, Theorem 2.1]\label{lem: menos_uma_aresta}
Let $G$ be a graph with $n$ vertices and let $e \in E(G)$. Then
\[
q_1(G) \geq q_1(G - e) \geq q_2(G) \geq \dots \geq q_n(G) \geq q_n(G - e).
\]
\end{lemma}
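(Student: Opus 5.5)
The statement to prove is Lemma~\ref{lem: menos_uma_aresta}, the interlacing of signless Laplacian eigenvalues under deletion of a single edge. The approach is a rank-one perturbation argument using Lemma~\ref{lema:Weyl}.

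Write $e = uv$ and let $x \in \mathbb{R}^n$ be the vector with $x_u = x_v = 1$ and all other entries $0$. Deleting $e$ lowers $d_u$ and $d_v$ by one and removes the two off-diagonal entries $Q_{uv} = Q_{vu} = 1$. Checking the $2 \times 2$ block on $\{u,v\}$ gives
\[
Q(G) = Q(G-e) + xx^{T}.
\]
The matrix $B = xx^{T}$ is positive semidefinite of rank one. Its eigenvalues are $\lambda_1(B) = \|x\|^2 = 2$ and $\lambda_2(B) = \cdots = \lambda_n(B) = 0$.

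Next, apply Lemma~\ref{lema:Weyl} with $A = Q(G-e)$ and this $B$.
\begin{enumerate}
\item For the upper bound, take $i \ge 2$ and $j = i-1$ in the first inequality. This gives $q_i(G) \le q_{i-1}(G-e) + \lambda_2(B) = q_{i-1}(G-e)$, which is exactly $q_{k}(G-e) \ge q_{k+1}(G)$ for $1 \le k \le n-1$.
\item For the lower bound, take $j = i$ in the second inequality. This gives $q_i(G) \ge q_i(G-e) + \lambda_n(B) = q_i(G-e)$ for every $i$.
\end{enumerate}
Chaining the inequalities $q_i(G) \ge q_i(G-e) \ge q_{i+1}(G)$ for $i = 1, \dots, n-1$, and then appending $q_n(G) \ge q_n(G-e)$, yields the displayed chain.

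The only step needing care is the first: verifying the decomposition $Q(G) = Q(G-e) + xx^T$ entrywise, in particular that the diagonal contributions at $u$ and $v$ match. The sign convention of the signless Laplacian is exactly what makes this a positive rank-one update.
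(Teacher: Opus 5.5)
Your proof is correct and complete. The paper does not prove this lemma; it quotes it from the cited reference, so there is no in-text argument to match. Your decomposition $Q(G)=Q(G-e)+xx^{T}$, with $x$ the indicator vector of $\{u,v\}$, is right entrywise. Your two index choices in Lemma~\ref{lema:Weyl} both work: $j=i-1$ in the first inequality, using $\lambda_2(xx^{T})=0$, and $j=i$ in the second, using $\lambda_n(xx^{T})=0$. They give exactly $q_{i-1}(G-e)\ge q_i(G)$ and $q_i(G)\ge q_i(G-e)$, which chain to the displayed statement.

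\medskip

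For comparison, a standard alternative route goes through the vertex--edge incidence matrix $R$, in keeping with the line-graph viewpoint of the cited survey.
\begin{itemize}
\item One has $Q(G)=RR^{T}$ and $R^{T}R=2I+A(L(G))$.
\item Deleting $e$ deletes the column of $R$ indexed by $e$. So the corresponding $(m-1)\times(m-1)$ matrix for $G-e$ is a principal submatrix of $R^{T}R$.
\item Cauchy interlacing (Lemma~\ref{lema: principal submatrix}) then applies on the $m\times m$ side.
\item One transfers back to $Q$ using that $RR^{T}$ and $R^{T}R$ have the same nonzero eigenvalues. Because the sizes $n$ and $m$ differ, this needs some bookkeeping of zero eigenvalues.
\end{itemize}
Your Weyl argument stays in $\mathbb{R}^{n}$ and avoids that bookkeeping. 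The line-graph version instead gives a direct dictionary between $Q$-eigenvalues of $G$ and adjacency eigenvalues of $L(G)$. Underneath, both rest on the same fact: $Q(G)-Q(G-e)=r_er_e^{T}$, where $r_e$ is the column of $R$ indexed by $e$, i.e.\ your $x$.

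One small correction to your closing remark: it overstates the role of the sign convention. The ordinary Laplacian also satisfies $L(G)=L(G-e)+yy^{T}$, with $y$ having entries $1$ and $-1$ at $u$ and $v$, so the same interlacing holds there. The sign only decides which rank-one vector appears; the update is positive semidefinite either way.
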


\begin{lemma}[\cite{Horn2012}, Corollary 4.3.5]\label{teo: inercia}
Let $A$ and $B$ be real symmetric $n\times n$ matrices. Suppose $\operatorname{rank}(B)=r$. Then the inertia of $A+B$ is controlled by that of $A$ in the following way:
\begin{align*}
\npos{A}-r \;\le\; \npos{A+B} \;\le\; \npos{A}+r,\\
\nneg{A}-r \;\le\; \nneg{A+B} \;\le\; \nneg{A}+r,\\
\nzero{A}-r \;\le\; \nzero{A+B} \;\le\; \nzero{A}+r,
\end{align*}
where for any symmetric matrix $X, \, \npos{X}, \nneg{X} \text{ and } \nzero{X}$ denote respectively the numbers of positive, negative and zero eigenvalues of $X$, counted with algebraic multiplicity.
\end{lemma}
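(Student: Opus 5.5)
The plan is to derive all three pairs of inequalities from Weyl's inequalities (Lemma~\ref{lema:Weyl}) together with elementary rank subadditivity. No new machinery is needed. Since $B$ is real symmetric of rank $r$, it has exactly $r$ nonzero eigenvalues. Write $p$ for the number of positive ones and $s$ for the number of negative ones, so that $p+s=r$.

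\emph{Positive eigenvalues.} Since $\lambda_{p+1}(B)\le 0$, the first Weyl inequality with $j=k$ and $i=k+p$ gives
\[
\lambda_{k+p}(A+B)\le \lambda_k(A)+\lambda_{p+1}(B)\le \lambda_k(A)
\]
for every $k$ with $k+p\le n$. Take $k=\npos{A}+1$, assuming $\npos{A}+1+p\le n$ (otherwise the bound is trivial). Then $\lambda_k(A)\le 0$, so $\lambda_{k+p}(A+B)\le 0$. Hence $\npos{A+B}\le \npos{A}+p\le \npos{A}+r$.

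For the lower bound, apply the same argument to the decomposition $A=(A+B)+(-B)$. Here $-B$ is also symmetric of rank $r$, so $\npos{A}\le\npos{A+B}+r$.

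\emph{Negative eigenvalues.} Replace the pair $(A,B)$ by $(-A,-B)$. This swaps the roles of positive and negative eigenvalues, since $\nneg{X}=\npos{-X}$. The two inequalities for $\nneg{\cdot}$ then follow from the ones just proved.

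\emph{Zero eigenvalues.} For symmetric matrices, $\nzero{X}=n-\operatorname{rank}(X)$. The difference $\npos{}+\nneg{}$ argument would only give a bound of $2r$, so I would avoid it. Instead I would use subadditivity of rank:
\[
\operatorname{rank}(A+B)\le\operatorname{rank}(A)+r \quad\text{and}\quad \operatorname{rank}(A)\le \operatorname{rank}(A+B)+\operatorname{rank}(-B).
\]
Together these give $|\nzero{A+B}-\nzero{A}|\le r$.

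The only step needing care is the index bookkeeping in Weyl's inequality, namely checking the range $1\le j\le i\le n$ and treating the edge cases where $\npos{A}+p\ge n$, where the claim is vacuous. I expect this to be the main, though minor, obstacle.
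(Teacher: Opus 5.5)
Your proof is correct and complete. The paper gives no proof of this lemma; it only quotes it from Horn--Johnson, so there is no argument to match step by step.

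Your derivation checks out:
\begin{itemize}
\item Weyl's inequality with $j=k$, $i=k+p$ and $\lambda_{p+1}(B)\le 0$ gives $\npos{A+B}\le\npos{A}+p\le\npos{A}+r$. The case $\npos{A}+p\ge n$ is trivial, as you note.
\item Writing $A=(A+B)+(-B)$ gives the lower bound.
\item Passing to $(-A,-B)$ handles $\nneg{\cdot}$.
\item Symmetric matrices are diagonalizable, so $\nzero{X}=n-\operatorname{rank}X$, and the last pair follows from rank subadditivity.
\end{itemize}

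Compared with the usual textbook route, your argument is the same in spirit and slightly sharper. The textbook proof goes through the rank-$r$ interlacing $\lambda_{k+r}(A)\le\lambda_k(A+B)\le\lambda_{k-r}(A)$, which is itself just Weyl plus $\lambda_{r+1}(B)\le 0\le\lambda_{n-r}(B)$. Your version shifts indices by the positive inertia $p$ (resp.\ negative inertia $s$) of $B$ rather than by $r$. For the paper's only use of the lemma, Case~3 of Theorem~\ref{thm:mg2} with $B=uu^T$, the two coincide since $p=r=1$.

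One small correction to your side remark about zero eigenvalues. Summing the $\npos{\cdot}$ and $\nneg{\cdot}$ bounds costs $2r$ only if you use the coarse bounds. With your sharper bounds, $\npos{A+B}+\nneg{A+B}\le\npos{A}+\nneg{A}+p+s$ and $p+s=r$. Doing the same with $-B$, whose inertias are swapped, gives the other direction. So the $\nzero{\cdot}$ bound also follows from the Weyl part, though your rank-subadditivity argument is still the cleaner way to get it.
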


\begin{lemma}[\cite{xu2024distribution}, Theorem 3.2]\label{teorema: Primeiro do Bo Zhou}
For a graph $G$ with $n$ vertices,\linebreak $\alpha(G) \leq \min\{m_G[d_n,\,2n-2],m_G[0,\,d_1]\}$.
\end{lemma}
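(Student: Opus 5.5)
The statement is Lemma~\ref{teorema: Primeiro do Bo Zhou}: $\alpha(G)\le m_G[d_n,2n-2]$ and $\alpha(G)\le m_G[0,d_1]$. The plan is to compress $Q(G)$ onto a maximum independent set and apply Cauchy interlacing (Lemma~\ref{lema: principal submatrix}).

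Let $S$ be an independent set with $|S|=\alpha=\alpha(G)$. Let $B$ be the principal submatrix of $Q(G)$ indexed by $S$. Since no two vertices of $S$ are adjacent, every off-diagonal entry of $B$ is $0$. Hence
\[
B=\operatorname{diag}(d_G(v):v\in S),
\]
and its eigenvalues are exactly the degrees $d_G(v)$, $v\in S$. Each of these lies in $[d_n,d_1]$.

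\textbf{Upper interval.} Apply the right-hand inequality of Lemma~\ref{lema: principal submatrix} with $p=\alpha$:
\[
q_i(G)\ge\lambda_i(B)\ge d_n \quad (1\le i\le\alpha).
\]
So $q_1(G),\dots,q_\alpha(G)$ all lie in $[d_n,2n-2]$, using the Gershgorin bound $q_1\le 2n-2$. This gives $m_G[d_n,2n-2]\ge\alpha$.

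\textbf{Lower interval.} Apply the left-hand inequality:
\[
q_{n-\alpha+i}(G)\le\lambda_i(B)\le d_1 \quad (1\le i\le\alpha).
\]
So the $\alpha$ smallest eigenvalues $q_{n-\alpha+1},\dots,q_n$ lie in $[0,d_1]$, using that $Q(G)$ is positive semidefinite. This gives $m_G[0,d_1]\ge\alpha$.

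Combining the two bounds yields the claim. There is no real obstacle here. The only point to check is the index bookkeeping in the lower inequality of Lemma~\ref{lema: principal submatrix}. The two counts $m_G[d_n,2n-2]$ and $m_G[0,d_1]$ are taken separately, so the sets of eigenvalues used in the two parts may overlap without affecting either bound.
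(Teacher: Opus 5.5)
Your argument is correct and matches the standard proof of this result: the principal submatrix of $Q(G)$ indexed by a maximum independent set $S$ is $\operatorname{diag}(d_G(v): v\in S)$, and the two sides of Cauchy interlacing give $q_{\alpha}(G)\ge \min_{v\in S} d_G(v)\ge d_n$ and $q_{n-\alpha+1}(G)\le \max_{v\in S} d_G(v)\le d_1$. The paper gives no proof of its own and only cites \cite{xu2024distribution}; its Lemma~\ref{e} uses the same principal-submatrix interlacing device, and applied to an independent set it gives exactly your upper-interval bound.
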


\begin{definition}[\cite{Oboudi2017}, Definition 2.12]\label{def:Gs}
For every integer $n \geq 2$, let $G_n$ be the graph of order $n$ obtained from the 
disjoint union of complete graphs $K_{\lceil n/2\rceil}$ and $K_{\lfloor n/2\rfloor}$ 
as follows. Let 
$V(K_{\lceil n/2\rceil})=\{v_1,\dots,v_{\lceil n/2\rceil}\}$ and 
$V(K_{\lfloor n/2\rfloor})=\{w_1,\dots,w_{\lfloor n/2\rfloor}\}$. 
We then add edges between vertices of $K_{\lceil n/2\rceil}$ and $K_{\lfloor n/2\rfloor}$ 
so that the following conditions are satisfied:

\begin{enumerate}
\item $N[v_1]\subset \cdots \subset N[v_{\lceil n/2\rceil}]$ and 
$N[w_1]\subset \cdots \subset N[w_{\lfloor n/2\rfloor}]$.
\item $\lvert N[v_i]\cap V(K_{\lfloor n/2\rfloor})\rvert = i-1$ for $i=1,\dots,\lceil n/2\rceil$.
\item $\lvert N[w_j]\cap V(K_{\lceil n/2\rceil})\rvert =
\begin{cases}
j-1, & \text{if $n$ is even},\\[4pt]
j, & \text{if $n$ is odd},
\end{cases}
\quad\text{for } j=1,\dots,\lfloor n/2\rfloor.
$
\end{enumerate}
\end{definition}

\begin{figure}[H]
\centering
\begin{tikzpicture}[
    scale=0.62,
    every node/.style={font=\normalsize},
    edge/.style={line width=0.8pt},
    vertex/.style={
        circle,
        draw,
        fill=black,
        line width=0.8pt,
        minimum size=5pt,
        inner sep=0pt
    },
    vlabel/.style={font=\small}
]

\node[vertex] (v1_2) at (-2,0) {};
\node[vlabel, above=0.1cm of v1_2] {$v_1$};

\node[vertex] (v3_2) at (-2,-3.5) {};
\node[vlabel, below=0.1cm of v3_2] {$w_1$};

\node[below=3.6cm of v1_2] {$G_2$};

\node[vertex] (v1_3) at (0,0) {};
\node[vlabel, above=0.1cm of v1_3] {$v_1$};

\node[vertex] (v2_3) at (1.2,-1.2) {};
\node[vlabel, above=0.1cm of v2_3] {$v_2$};

\node[vertex] (v3_3) at (1.2,-3.5) {};
\node[vlabel, below=0.1cm of v3_3] {$w_1$};

\draw[edge] (v1_3)--(v2_3);
\draw[edge] (v2_3)--(v3_3);

\node[below=3.6cm of v1_3] {$G_3$};

\node[vertex] (v1_4) at (4,0) {};
\node[vlabel, above=0.1cm of v1_4] {$v_1$};

\node[vertex] (v2_4) at (5.2,-1.2) {};
\node[vlabel, above=0.1cm of v2_4] {$v_2$};

\node[vertex] (v3_4) at (5.2,-3.5) {};
\node[vlabel, below=0.1cm of v3_4] {$w_2$};

\node[vertex] (v4_4) at (2.8,-3.5) {};
\node[vlabel, below=0.1cm of v4_4] {$w_1$};

\draw[edge] (v1_4)--(v2_4);
\draw[edge] (v2_4)--(v3_4);
\draw[edge] (v3_4)--(v4_4);

\node[below=3.6cm of v1_4] {$G_4$};

\node[vertex] (v1_5) at (8,0) {};
\node[vlabel, above=0.1cm of v1_5] {$v_1$};

\node[vertex] (v2_5) at (9.2,-1.2) {};
\node[vlabel, above=0.1cm of v2_5] {$v_2$};

\node[vertex] (v3_5) at (6.8,-1.2) {};
\node[vlabel, above=0.1cm of v3_5] {$v_3$};

\node[vertex] (v4_5) at (9.2,-3.5) {};
\node[vlabel, below=0.1cm of v4_5] {$w_2$};

\node[vertex] (v5_5) at (6.8,-3.5) {};
\node[vlabel, below=0.1cm of v5_5] {$w_1$};

\draw[edge] (v1_5)--(v2_5);
\draw[edge] (v1_5)--(v3_5);
\draw[edge] (v2_5)--(v3_5);
\draw[edge] (v2_5)--(v4_5);
\draw[edge] (v3_5)--(v4_5);
\draw[edge] (v3_5)--(v5_5);
\draw[edge] (v4_5)--(v5_5);

\node[below=3.6cm of v1_5] {$G_5$};

\node[vertex] (u1) at (12.2,0) {};
\node[vlabel, above=0.1cm of u1] {$v_1$};

\node[vertex] (u2) at (13.4,-1.2) {};
\node[vlabel, above=0.1cm of u2] {$v_2$};

\node[vertex] (u3) at (11,-1.2) {};
\node[vlabel, above=0.1cm of u3] {$v_3$};

\node[vertex] (u4) at (13.4,-3.5) {};
\node[vlabel, below=0.1cm of u4] {$w_3$};

\node[vertex] (u5) at (11,-3.5) {};
\node[vlabel, below=0.1cm of u5] {$w_2$};

\node[vertex] (u6) at (12.2,-4.7) {};
\node[vlabel, below=0.1cm of u6] {$w_1$};

\draw[edge] (u1)--(u2);
\draw[edge] (u1)--(u3);
\draw[edge] (u2)--(u3);
\draw[edge] (u2)--(u4);
\draw[edge] (u3)--(u4);
\draw[edge] (u3)--(u5);
\draw[edge] (u4)--(u5);
\draw[edge] (u4)--(u6);
\draw[edge] (u5)--(u6);

\node[below=3.6cm of u1] {$G_6$};

\end{tikzpicture}

\caption{Graph $G_s$, for $3 \leq s \leq 6$.}
\end{figure}

\begin{remark}[\cite{Oboudi2017}, Remark 2.13]\label{obs: recursao}
For every $n\ge 3$, $G_n$ is an induced subgraph of $G_{n+1}$. More precisely, if $n$ is even then $G_{n+1}\cong K_1\vee G_n$; if $n$ is odd, then $G_{n+1}$ is obtained from $G_n$ by adding a vertex $w$ adjacent to all vertices of $\{w_1,\dots,w_{\lfloor n/2\rfloor}\}$ (where $K_{\lfloor n/2\rfloor}$ is one of the parts of $G_n$).
\end{remark}

\begin{definition}[\cite{Oboudi2017}, Definition 2.17]
Let $G$ be a graph with vertex set $\{v_1,\dots,v_n\}$. We denote by $G[K_{t_1},\dots,K_{t_n}]$ the graph obtained by replacing each vertex $v_j$ with a complete graph $K_{t_j}$, $1\le j\le n$, and making every vertex of $K_{t_i}$ adjacent to every vertex of $K_{t_j}$ if and only if $v_i$ is adjacent to $v_j$ in $G$. For example, $K_2[K_p,K_q]\cong K_{p+q}$ and $K_2[K_p,K_q]\cong K_p\cup K_q$.
\end{definition}

\begin{lemma}[\cite{Oboudi2017}, Theorem 2.19]\label{teorema: Oboudi}
Let $G$ be a graph with eigenvalues\linebreak $\lambda_1(G) \geq \cdots \geq \lambda_n(G)$. Assume that $\lambda_3(G) < 0$. Then the following hold:

\begin{enumerate}
\item If $\lambda_1(G) > 0$ and $\lambda_2(G) > 0$, then $G \cong K_p \cup K_q$ for some integers $p, q \geq 2$ or there exist some positive integers $s$ and $t_1, \ldots, t_s$ so that $3 \leq s \leq 12$ and $t_1 + \cdots + t_s = n$ and $G \cong G_s[K_{t_1}, \ldots, K_{t_s}]$.
\item If $\lambda_1(G) > 0$ and $\lambda_2(G) = 0$, then $G \cong K_1 \cup K_{n-1}$ or $G \cong K_n - e$, where $e$ is an edge of $K_n$.
\item If $\lambda_1(G) > 0$ and $\lambda_2(G) < 0$, then $G \cong K_n$.
\end{enumerate} 
\end{lemma}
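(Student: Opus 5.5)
The plan is to translate the spectral hypothesis into forbidden induced subgraphs via Cauchy interlacing, and then do a structural classification. By Lemma~\ref{lema: principal submatrix}, if $H$ is an induced subgraph of $G$ then $\lambda_3(H)\le \lambda_3(G)<0$. So no induced subgraph of $G$ on at least three vertices may have $\lambda_3\ge 0$. This gives a family of forbidden induced subgraphs:
\begin{itemize}
\item $3K_1$, whose spectrum is $0,0,0$; forbidding it means $\alpha(G)\le 2$, i.e. $\overline{G}$ is triangle-free;
\item $P_5$, with $\lambda_3=0$;
\item $C_5$, with $\lambda_3>0$;
\item the remaining small graphs with $\lambda_3\ge 0$, found by a finite check on at most $5$ or $6$ vertices.
\end{itemize}
Throughout, the hypothesis $\lambda_3<0$ will be used in the equivalent form $n_+(A)+n_0(A)\le 2$.

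Cases 3 and 2 come first, since they are short. If $\lambda_2(G)<0$ and $u,v$ are nonadjacent, the induced $2K_1$ has eigenvalue $0$, so $\lambda_2(G)\ge 0$ by interlacing, a contradiction. Hence $G\cong K_n$. If $\lambda_2(G)=0$, then $G$ has no induced $K_1\cup K_2$, since its spectrum $1,0,-1$ combined with a further positive-forcing structure would give $\lambda_2>0$. More precisely, I would show directly that two disjoint nonedges, or a nonedge plus an isolated vertex configuration, force $\lambda_2>0$. This leaves only the case where the complement has at most one edge or is a star with isolated vertices, which yields $K_n-e$ or $K_1\cup K_{n-1}$. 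Each of these is then verified by computing its spectrum.

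For Case 1, I would reduce modulo \emph{adjacent twins}, meaning vertices with equal closed neighborhoods. Write $G\cong H[K_{t_1},\dots,K_{t_s}]$ with $H$ twin-free. The key fact is that blowing up a vertex into a clique preserves the number of nonnegative adjacency eigenvalues. Indeed, the eigenvectors that are constant on each clique give a quotient matrix whose spectrum is governed by $H$, and all other eigenvectors give eigenvalue $-1$. So $\lambda_3(G)<0$ if and only if $\lambda_3(H)<0$, and it suffices to classify twin-free $H$.

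If $H$ is disconnected, then $\alpha\le 2$ and interlacing force exactly two clique components, giving $K_p\cup K_q$. If $H$ is connected, then $\overline{H}$ is triangle-free and the forbidden subgraphs $P_5$ and $C_5$ apply. From these I aim to show that $V(H)$ splits into two cliques whose cross-neighborhoods are nested, which is exactly condition 1 of Definition~\ref{def:Gs}. Twin-freeness then forces consecutive cross-degrees to differ by exactly one, so $H\cong G_s$. Finally, using the nesting $G_s\subset G_{s+1}$ from Remark~\ref{obs: recursao}, I compute that $\lambda_3(G_{13})\ge 0$ while $\lambda_3(G_s)<0$ for $3\le s\le 12$, which gives $s\le 12$.

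I expect the main obstacle to be the structural step: proving that the cross-neighborhoods are nested, i.e. that $H$ has no induced $2K_2$ between the two cliques in the "crossing" pattern. This needs the correct finite list of forbidden graphs. I would first try to show that any violating configuration contains an induced $P_5$, a $C_5$, or a six-vertex graph with $\lambda_3\ge 0$, with the last case checked by explicit computation. The threshold $s=12$ is a numerical computation that I would simply carry out.
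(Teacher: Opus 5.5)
The paper gives no proof of this lemma; it is quoted from Oboudi, so your outline has to stand on its own. The architecture is workable: interlacing to get forbidden induced subgraphs, reduction modulo true twins, identification of the twin-free graphs as $G_s$, then one computation on $G_{13}$. As written, though, it has a genuine gap: it relies on a false lemma and leaves the decisive structural step undone.

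\emph{The blow-up ``key fact'' is false.} For $G=H[K_{t_1},\dots,K_{t_s}]$ the quotient matrix satisfies $B+I=(A(H)+I)T$ with $T=\mathrm{diag}(t_1,\dots,t_s)$. This is similar to $T^{1/2}(A(H)+I)T^{1/2}$, so blowing up preserves the inertia of $A+I$, i.e.\ the number of eigenvalues $>-1$. It does not preserve the number of nonnegative eigenvalues. Concretely, $G_4=P_4$ has $\lambda_3=-0.618\ldots$, while $P_4[K_3,K_3,K_3,K_3]$ has eigenvalues $3\lambda_i(P_4)+2$ together with $-1$, hence $\lambda_3=0.146\ldots>0$. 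The reduction survives only because you need just $\lambda_3(G)<0\Rightarrow\lambda_3(H)<0$. That direction is immediate from interlacing, since $H$ is an induced subgraph of $G$. Drop the ``if and only if''.

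\emph{The structural step is not carried out, and you look for the wrong obstructions.} $P_5$ is useless as a forbidden subgraph of $H$, since it already contains $3K_1$.
\begin{itemize}
\item To split $V(H)$ into two cliques you need $\overline H$ bipartite. A shortest odd cycle of the triangle-free graph $\overline H$ is induced. Length $5$ is excluded because $C_5$ is self-complementary. Length at least $7$ yields an induced $P_5$ in $\overline H$, i.e.\ an induced house $\overline{P_5}$ in $H$.
\item The house must be checked directly: its spectrum is $0,-2$ and the roots of $x^3-2x^2-2x+2$, so $\lambda_3=0$. Knowing $\lambda_3(P_5)=0$ says nothing about $P_5$ in the complement.
\item Nestedness then needs no six-vertex search. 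With $X,Y$ cliques, the crossing pattern $x_1y_1,x_2y_2\in E(H)$, $x_1y_2,x_2y_1\notin E(H)$ is exactly an induced $C_4$. Its spectrum is $2,0,0,-2$, so $\lambda_3=0$.
\item In the twin-free step you must also exclude a vertex of $X$ complete to $Y$ coexisting with a vertex of $Y$ complete to $X$, since these two are twins.
\end{itemize}

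\emph{Case 2 contains false statements.}
\begin{itemize}
\item $K_1\cup K_{n-1}$, one of the two answers, does contain an induced $K_1\cup K_2$.
\item Two disjoint nonedges can induce $C_4$, which has $\lambda_2=0$; it is excluded only via $\lambda_3$.
\item ``Complement a star plus isolated vertices'' still admits $(K_1\cup K_k)\vee K_{n-k-1}$ with $2\le k\le n-2$. This must be ruled out separately, e.g.\ because it contains an induced paw, whose $\lambda_2\approx 0.31>0$.
\end{itemize}

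Your threshold step is sound. In fact $\lambda_3(G_{12})<0=\lambda_3(G_{13})$, and $G_{13}$ is an induced subgraph of every $G_s$ with $s\ge 13$. The additional claim that $\lambda_3(G_s)<0$ for $s\le 12$ is true but unnecessary, and by the counterexample above it says nothing about blow-ups of $G_s$.
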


\begin{lemma}[\cite{Yan2002}]\label{lema: quase completo}
Let \(G = K_n- e\), where \(n\ge 3\). Then
\[
\operatorname{Spec}\big(G \big)
=
\left\{\,q_+^{[1]},\,q_-^{[1]},\, n-2^{[n-2]}\right\},
\]
where
\[
q_{\pm}=\frac{3n-6\pm\sqrt{(n-2)(n+6)}}{2}.
\]

\end{lemma}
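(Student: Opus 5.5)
We prove the statement by writing down $n-2$ explicit eigenvectors of $Q(G)$ for the eigenvalue $n-2$. The remaining two eigenvalues come from a $2\times 2$ quotient matrix of an equitable partition.

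Label the two non-adjacent vertices $u,v$ and let $W=V(G)\setminus\{u,v\}$, so $|W|=n-2$. Then $d(u)=d(v)=n-2$ and every $w\in W$ has degree $n-1$. The partition $\{u,v\}\cup W$ is equitable:
\begin{itemize}
\item $u$ and $v$ each have $0$ neighbours in $\{u,v\}$ and $n-2$ neighbours in $W$;
\item each $w\in W$ has $2$ neighbours in $\{u,v\}$ and $n-3$ neighbours in $W$.
\end{itemize}

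First we exhibit the eigenvalue $n-2$ with multiplicity $n-2$.

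\emph{Vectors supported on $W$ with coordinate sum zero.} Take $x$ with $x_u=x_v=0$ and $\sum_{w\in W}x_w=0$. For $w\in W$,
\[
(Qx)_w=(n-1)x_w+\sum_{w'\in W,\,w'\neq w}x_{w'}=(n-1)x_w-x_w=(n-2)x_w .
\]
At $u$ (and likewise at $v$), $(Qx)_u=\sum_{w\in W}x_w=0$. These vectors form a space of dimension $n-3$.

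\emph{The vector $e_u-e_v$.} At $u$ it gives $(n-2)\cdot 1$, at $v$ it gives $(n-2)\cdot(-1)$, since $u\not\sim v$. At each $w\in W$ it gives $1-1=0$. So it is also an eigenvector for $n-2$.

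All of these vectors are orthogonal to the two-dimensional space of vectors that are constant on each part. Together they give $n-2$ linearly independent eigenvectors for $n-2$.

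Next, the orthogonal complement consists of the vectors constant on $\{u,v\}$ and on $W$. This space is $Q$-invariant because the partition is equitable. On it, $Q$ acts by the quotient matrix
\[
B=\begin{pmatrix} n-2 & n-2\\ 2 & (n-1)+(n-3)\end{pmatrix}=\begin{pmatrix} n-2 & n-2\\ 2 & 2n-4\end{pmatrix}.
\]
We have
\[
\operatorname{tr}B=3n-6,\qquad \det B=(n-2)(2n-4)-2(n-2)=(n-2)(2n-6).
\]
The discriminant is
\[
(3n-6)^2-4(n-2)(2n-6)=(n-2)\bigl(9n-18-8n+24\bigr)=(n-2)(n+6).
\]
Hence the eigenvalues of $B$ are exactly $q_\pm$, which completes the spectrum.

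There is no real obstacle here. The only care needed is the bookkeeping of $B$'s entries, in particular the diagonal entry $2n-4$, which is the degree $n-1$ plus the $n-3$ neighbours inside $W$. We also note that $q_\pm\neq n-2$ for $n\geq 3$, although this is not needed for the multiset statement.
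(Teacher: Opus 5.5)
Your proof is correct and complete. The sum-zero vectors on $W$ together with $e_u-e_v$ give the eigenvalue $n-2$ with multiplicity $n-2$. The quotient matrix $B$ has trace $3n-6$, determinant $(n-2)(2n-6)$ and discriminant $(n-2)(n+6)$, which yields exactly $q_\pm$; at $n=3$ you recover $\{3,1,0\}$, the spectrum of $P_3$.

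The paper gives no proof of this lemma; it imports the result from \cite{Yan2002}. So your argument supplies what the paper takes on citation.

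The paper's own machinery would also produce it, since $K_n-e\cong G_3[K_1,K_{n-2},K_1]$. Lemma~\ref{lemma: spectrum of G_s} then contributes $(\deg_2-1)^{[t_2-1]}=(n-2)^{[n-3]}$, together with the reduced matrix $\begin{pmatrix} n-2 & n-2 & 0\\ 1 & 2n-4 & 1\\ 0 & n-2 & n-2\end{pmatrix}$. Its antisymmetric eigenvector $(1,0,-1)^T$ gives the last copy of $n-2$, which is your $e_u-e_v$. Its symmetric part, acting on vectors $(a,b,a)^T$, is precisely your $2\times 2$ matrix $B$.

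Your version merges $u$ and $v$ into one cell and writes the eigenvectors down explicitly. It is therefore self-contained and does not depend on the external result behind Lemma~\ref{lemma: spectrum of G_s}. The route through that lemma is shorter once the lemma is granted.

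If the notation $(n-2)^{[n-2]}$ is read as asserting the exact multiplicity, your side remark that $q_\pm\neq n-2$ follows in one line from $\det\bigl(B-(n-2)I\bigr)=-2(n-2)\neq 0$.
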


\begin{lemma}[\cite{Das}, Corollary 3.2]\label{{q_2}{d_2} - 1}
Let $G$ be a graph with second maximum degree ${d_2}$. Then 
$${q_2}(G) \ge {d_2} - 1.$$
If ${q_2(G)} = {d_2} - 1$, then the maximum and the second maximum degree are adjacent and ${d_1} = {d_2}$.
\end{lemma}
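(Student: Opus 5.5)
The plan is to apply Cauchy interlacing (Lemma~\ref{lema: principal submatrix}) to a $2\times 2$ principal submatrix of $Q(G)$. Let $u$ and $v$ be vertices with $d_G(u)=d_1$ and $d_G(v)=d_2$. Let $B$ be the principal submatrix of $Q(G)$ indexed by $\{u,v\}$:
\[
B=\begin{pmatrix} d_1 & a\\ a & d_2\end{pmatrix},\qquad a=\begin{cases}1,& uv\in E(G),\\ 0,& \text{otherwise.}\end{cases}
\]
Taking $p=2$ and $i=2$ in Lemma~\ref{lema: principal submatrix} gives $q_2(G)\ge \lambda_2(B)$. The inequality therefore reduces to showing $\lambda_2(B)\ge d_2-1$.

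The eigenvalues of $B$ are explicit:
\[
\lambda_2(B)=\frac{d_1+d_2-\sqrt{(d_1-d_2)^2+4a^2}}{2}.
\]
If $a=0$, then $\lambda_2(B)=\min\{d_1,d_2\}=d_2$. This already exceeds $d_2-1$ strictly.

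If $a=1$, put $x=d_1-d_2\ge 0$. The elementary inequality $\sqrt{x^2+4}\le x+2$ holds because $(x+2)^2-(x^2+4)=4x\ge 0$. Hence
\[
\lambda_2(B)\ge \frac{d_1+d_2-(d_1-d_2)-2}{2}=d_2-1,
\]
which proves $q_2(G)\ge d_2-1$.

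For the equality case, suppose $q_2(G)=d_2-1$. Then $d_2-1=q_2(G)\ge\lambda_2(B)\ge d_2-1$, so $\lambda_2(B)=d_2-1$.
\begin{itemize}
\item By the case analysis above, this rules out $a=0$, so $a=1$; that is, the vertices of maximum and second maximum degree are adjacent.
\item It also forces $\sqrt{x^2+4}=x+2$, i.e.\ $4x=0$, so $d_1=d_2$.
\end{itemize}

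The main subtlety is the equality case when several vertices share degree $d_1$ or $d_2$. The choice of $u,v$ is arbitrary, and the argument applies to every such pair. So the conclusion holds for any choice of a maximum degree vertex and a second maximum degree vertex, which is the reading of the statement I would adopt. The computation itself is routine.
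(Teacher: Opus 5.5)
Your proof is correct. The paper gives no proof of this lemma; it simply imports it from Das (Corollary~3.2), so there is no internal argument to compare against.

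Your $2\times 2$ interlacing argument is complete and self-contained, and it uses only the Cauchy interlacing lemma already stated in the preliminaries. Both halves check out. Interlacing gives $q_2(G)\ge\lambda_2(B)$. The bound $\lambda_2(B)\ge d_2-1$ then follows from $\sqrt{x^2+4}\le x+2$ (strictly $\lambda_2(B)=d_2$ when $a=0$). In the equality case, $\lambda_2(B)=d_2-1$ forces first $a=1$ and then $x=0$.

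Your observation that the argument works for every pair of distinct vertices $u,v$ with $d(u)=d_1$ and $d(v)=d_2$ is valid. It gives a slightly stronger equality conclusion than stated: if $q_2(G)=d_2-1$, then $d_1=d_2$ and the vertices of maximum degree are pairwise adjacent. The only implicit assumption is $n\ge 2$, which is needed for $d_2$ and the pair $u\ne v$ to exist.
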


\begin{lemma}[\cite{belardo}, Theorema 3.1]\label{lem: k3comd3}
Let \(G\) be a graph and $u$, $v$ and $w$ be vertices of degree at least $d_3$. We have:
\begin{enumerate}
\item if $u$, $v$ and $w$ induce $3K_1$, then $q_3(G) \ge d_3(G)$,
\item if $u$, $v$ and $w$ induce $K_3$ or ${P_2} \cup {K_1}$. then $q_3(G) \ge d_3(G) - 1$,
\item if $u$, $v$ and $w$ induce $P_3$, then $q_3(G) \ge d_3(G)- \sqrt 2 $.
\end{enumerate} 

\end{lemma}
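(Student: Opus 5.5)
The plan is to compare $Q(G)$ with its $3\times 3$ principal submatrix indexed by $u,v,w$ and then bound the smallest eigenvalue of that small matrix. Let $B$ be the principal submatrix of $Q(G)$ on the rows and columns of $u,v,w$. Then
\[
B = D' + A',
\]
where $D'=\mathrm{diag}(d_G(u),d_G(v),d_G(w))$ and $A'$ is the adjacency matrix of the subgraph induced by $\{u,v,w\}$. By Lemma~\ref{lema: principal submatrix} with $p=3$ and $i=3$, we get $\lambda_3(B)\le \lambda_3(Q(G))=q_3(G)$. So it suffices to show that $\lambda_3(B)$ is at least the claimed bound in each case.

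Next I split $B$ as
\[
B = \big(D' - d_3 I\big) + \big(d_3 I + A'\big).
\]
By hypothesis each of $u,v,w$ has degree at least $d_3$, so $D'-d_3I$ is a diagonal matrix with nonnegative entries. It is therefore positive semidefinite and $\lambda_3(D'-d_3I)\ge 0$. Applying the second inequality of Lemma~\ref{lema:Weyl} with $n=3$ and $i=j=3$ gives
\[
\lambda_3(B)\ \ge\ \lambda_3(D'-d_3I)+\lambda_3(d_3I+A')\ \ge\ d_3+\lambda_3(A').
\]
So everything reduces to the least adjacency eigenvalue of the induced subgraph on three vertices.

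Finally I compute $\lambda_3(A')$ case by case.
\begin{enumerate}
\item For $3K_1$, $A'=0$, so $\lambda_3(A')=0$ and $q_3(G)\ge d_3$.
\item For $K_3$ the adjacency spectrum is $\{2,-1,-1\}$, and for $P_2\cup K_1$ it is $\{1,0,-1\}$. In both cases $\lambda_3(A')=-1$, so $q_3(G)\ge d_3-1$.
\item For $P_3$ the spectrum is $\{\sqrt2,0,-\sqrt2\}$, so $q_3(G)\ge d_3-\sqrt2$.
\end{enumerate}

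There is no real obstacle here. The only point needing care is using the correct index pattern in Weyl's inequality, $\lambda_3(X+Y)\ge\lambda_3(X)+\lambda_3(Y)$ for $3\times 3$ matrices, together with the right direction of interlacing, so that the bound transfers from $B$ to $q_3(G)$.
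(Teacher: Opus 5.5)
Your proof is correct. The paper itself gives no proof of this lemma: it is imported from \cite{belardo} and used as a black box, so there is no in-paper argument to compare against.

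Your route has two steps. First, interlacing gives $\lambda_3(B)\le q_3(G)$ for the $3\times 3$ principal submatrix $B$ on $\{u,v,w\}$ (Lemma~\ref{lema: principal submatrix} with $p=i=3$). Second, Weyl's inequality gives $\lambda_3(B)\ge\lambda_3(D'-d_3I)+\lambda_3(d_3I+A')\ge d_3+\lambda_3(A')$ (Lemma~\ref{lema:Weyl} with $n=i=j=3$).

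This uses only the paper's own preliminaries, so it would make the paper self-contained at this point. It closely parallels the mechanism the authors use in Lemma~\ref{e}: split a principal submatrix into a positive semidefinite part plus a diagonal shift, then interlace.

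Your particular split is what produces the stated constants. The decomposition of Lemma~\ref{e}, $B=Q(S)+D(S)$, would only give $d_3-2$ in the $K_3$ and $P_3$ cases. Keeping the adjacency part $A'$ rather than $Q(S)$ is what yields $-1$ and $-\sqrt2$, which are exactly the least adjacency eigenvalues of the induced three-vertex graphs.

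Your argument also proves slightly more. For any three distinct vertices, $q_3(G)\ge \min\{d_G(u),d_G(v),d_G(w)\}+\lambda_3(A')$. The hypothesis that the degrees are at least $d_3$ is needed only to replace that minimum by $d_3$.
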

\section{Main results}

\subsection{Distribution over the intervals $[d_n,\,2n-2]$ and $[d_{n-1}, 2n-2]$
} \label{subsec:31}

\vspace{0.3cm}

\begin{lemma} \label{corolario: grafo completo}
Let \( G \) be a graph of order $n$ with degree sequence\linebreak \( d_1 \geq \cdots \geq d_n \). Then, \( m_G [d_n,\, 2n-2] = 1 \) if and only if \( G = K_n \).
\end{lemma}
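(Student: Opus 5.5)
The plan is to prove the two directions separately, with the forward direction reduced to the independence number via Lemma~\ref{teorema: Primeiro do Bo Zhou}.

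\emph{The complete graph satisfies the condition.} For $G=K_n$ with $n\ge 2$, we have $Q(K_n)=J+(n-2)I$. Hence $\operatorname{Spec}(K_n)=\{2n-2^{[1]},\,n-2^{[n-1]}\}$ and $d_n=n-1$. Since $n-2<n-1\le 2n-2$, exactly one eigenvalue lies in $[d_n,2n-2]$, so $m_{K_n}[d_n,2n-2]=1$. The degenerate case $n=1$ gives $Q=(0)$ and $d_1=0$, so the count is again $1$.

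\emph{The condition forces $G$ to be complete.} Suppose $m_G[d_n,2n-2]=1$. By Lemma~\ref{teorema: Primeiro do Bo Zhou},
\[
\alpha(G)\le m_G[d_n,2n-2]=1,
\]
so every pair of distinct vertices is adjacent. Therefore $G=K_n$.

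This settles both directions. There is no real obstacle; the only points needing care are the trivial case $n=1$ and checking that $n-2$ lies strictly below $d_n=n-1$ in the spectrum computation. If one wanted to avoid Lemma~\ref{teorema: Primeiro do Bo Zhou}, the fallback would be direct. If $u,v$ are nonadjacent, the principal $2\times 2$ submatrix of $Q(G)$ on $\{u,v\}$ is $\operatorname{diag}(d(u),d(v))$. Its second eigenvalue is $\min\{d(u),d(v)\}\ge d_n$, so Lemma~\ref{lema: principal submatrix} gives $q_2(G)\ge d_n$, contradicting $m_G[d_n,2n-2]=1$.
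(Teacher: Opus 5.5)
Your proof is correct and is essentially the paper's: the known spectrum $\{(2n-2)^{[1]},(n-2)^{[n-1]}\}$ of $Q(K_n)$ together with $d_n=n-1$ gives one direction, and the independence-number bound $\alpha(G)\le m_G[d_n,2n-2]$ forces $\alpha(G)=1$, hence $G=K_n$, for the other. Your fallback is also valid: applying interlacing to the principal submatrix $\operatorname{diag}(d(u),d(v))$ on a nonadjacent pair makes the argument self-contained, and you also handle the case $n=1$ explicitly, which the paper leaves implicit.
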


\begin{proof}
As it is well known (see, for instance,~\cite{Brouwer2012}), the signless Laplacian spectrum of the complete graph $K_n$ is given by 
\[
\{\,2(n-1)^{[1]},\; (n-2)^{[n-1]}\,\}.
\]
Hence, one side is clear for all $n \geq 1$. Now, suppose that $m_G[d_n, 2n-2] = 1$. By Theorem~\ref{teorema: Primeiro do Bo Zhou}, \( \alpha(G) \leq m_G[d_n, 2n-2] = 1 \). Thus \( \alpha(G) = 1 \), which implies that \( G \) is a complete graph, i.e., \( G = K_n \). 
\end{proof}

The following result can be viewed as a {direct consequence of [Theorem~1, ~\cite{LiZhou2017}].}

\begin{lemma}\label{lemma: spectrum of G_s}
Let $G = G_s[K_{t_1},\dots,K_{t_s}]$. For each block $B_i$ corresponding to $v_i \in V(G_s)$, denote by $\deg_i$ the degree of any vertex in $B_i$, $i=1,2,\dots,s$. Then
\[
\operatorname{Spec}(G) = \operatorname{Spec}(B) \;\cup\; \bigcup_{i=1}^s \{(\deg_i - 1)^{[t_i-1]}\},
\]
where $B$ is the $s \times s$ matrix defined by
\[
B_{ii} = \deg_i + (t_i - 1), \qquad
B_{ij} =
\begin{cases}
t_j & \text{if } v_i \sim v_j \text{ in } G_s,\\[2mm]
0 & \text{otherwise.}
\end{cases}
\]
\end{lemma}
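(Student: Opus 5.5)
We prove the statement with an equitable-partition (quotient matrix) argument together with explicit eigenvectors supported inside single blocks. Let $n=t_1+\cdots+t_s$ and write $V(G)=B_1\cup\cdots\cup B_s$, where $B_i$ induces $K_{t_i}$. Every vertex of $B_i$ is adjacent to the other $t_i-1$ vertices of $B_i$ and to all $t_j$ vertices of each $B_j$ with $v_j\sim v_i$ in $G_s$. Hence all vertices of a block have the same degree
\[
\deg_i=(t_i-1)+\sum_{v_j\sim v_i}t_j,
\]
so $\deg_i$ is well defined. This blow-up structure is all we use, so the result is essentially a special case of the cited Theorem~1 of \cite{LiZhou2017}; we nevertheless give the direct argument.

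\textbf{Step 1 (local eigenvectors).} Fix $i$ and a vector $x\in\mathbb{R}^{V(G)}$ supported on $B_i$ with $\sum_{u\in B_i}x_u=0$. We compute $Q(G)x=D(G)x+A(G)x$ coordinatewise.
\begin{itemize}
\item At a vertex $u\in B_i$, $D(G)x$ contributes $\deg_i x_u$. Since $B_i$ is a clique, $A(G)x$ contributes $\sum_{w\in B_i,\,w\ne u}x_w=-x_u$. So $(Q(G)x)_u=(\deg_i-1)x_u$.
\item At a vertex outside $B_i$, the entry of $A(G)x$ is either $\sum_{w\in B_i}x_w=0$ or $0$, because any vertex outside $B_i$ is adjacent to all of $B_i$ or to none of it.
\end{itemize}
Thus $x$ is an eigenvector of $Q(G)$ with eigenvalue $\deg_i-1$. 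The space of such vectors has dimension $t_i-1$, and spaces coming from different blocks are orthogonal. Together they give a subspace $W$ of dimension $\sum_i(t_i-1)=n-s$ that realises the multiset $\bigcup_i\{(\deg_i-1)^{[t_i-1]}\}$.

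\textbf{Step 2 (quotient part).} Consider $W^\perp$. It consists exactly of the vectors that are constant on each block, and it has dimension $s$. Since $Q(G)$ is symmetric and $W$ is $Q(G)$-invariant, $W^\perp$ is also $Q(G)$-invariant. Let $x$ take the value $y_j$ on $B_j$. For $u\in B_i$,
\[
(Q(G)x)_u=\deg_i y_i+(t_i-1)y_i+\sum_{v_j\sim v_i}t_j y_j=(By)_i .
\]
So the restriction of $Q(G)$ to $W^\perp$ is represented, in the basis of block indicator vectors, by exactly the matrix $B$ of the statement. Its eigenvalues are therefore $\operatorname{Spec}(B)$. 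The matrix $B$ is not symmetric, but it is similar to the symmetric matrix $T^{1/2}BT^{-1/2}$ with $T=\operatorname{diag}(t_1,\dots,t_s)$. Hence its spectrum is real, and its eigenvalues with multiplicities coincide with those of the restriction.

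\textbf{Step 3 (conclusion).} Since $\mathbb{R}^n=W\oplus W^\perp$ with both summands invariant, $\operatorname{Spec}(G)$ is the union of the two multisets from Steps~1 and~2. This proves the formula.

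None of the steps is hard. The only point needing care is to justify that the spectrum of the non-symmetric $B$ counts multiplicities correctly, which the invariant-subspace argument settles. Note that nothing specific to $G_s$ is used: the proof works for any blow-up $H[K_{t_1},\dots,K_{t_s}]$.
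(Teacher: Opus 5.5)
Your proof is correct. Each step checks out:
\begin{itemize}
\item Zero-sum vectors supported on a single block are eigenvectors of $Q(G)$ with eigenvalue $\deg_i-1$.
\item Their span $W$ has dimension $n-s$.
\item $W^\perp$ is exactly the space of block-constant vectors.
\item In the basis of block indicators, $Q(G)$ acts on $W^\perp$ by precisely the non-symmetric matrix $B$ of the statement, with $B_{ij}=t_j$.
\end{itemize}
So the characteristic polynomial factors as claimed, with multiplicities.

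The paper gives no argument of its own. It declares the lemma a direct consequence of Theorem~1 of~\cite{LiZhou2017}. That theorem gives the signless Laplacian spectrum of a generalized join $H[G_1,\dots,G_s]$ of regular graphs. The paper specializes it to $G_i=K_{t_i}$: the eigenvalue $t_i-2$ of $Q(K_{t_i})$, on the complement of $\mathbf{1}$, shifts by $\sum_{v_j\sim v_i}t_j$ and becomes $\deg_i-1$.

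Your route is the self-contained version of the same mechanism: eigenvectors orthogonal to the all-ones vector within each block, plus an equitable quotient.
\begin{itemize}
\item The citation buys brevity and extra generality, since it allows arbitrary regular graphs in the blocks.
\item Your direct argument verifies that this particular unsymmetrized $B$ is the correct quotient, without translating the cited theorem's notation.
\item It also handles multiplicities cleanly through the invariant splitting $\mathbb{R}^n=W\oplus W^\perp$. This makes your remark about the similarity to $T^{1/2}BT^{-1/2}$ unnecessary, though harmless.
\end{itemize}
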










\begin{lemma}\label{lema: g3 e g4}
Let $G = G_s[K_{t_1}, \ldots, K_{t_s}]$. Then, $m_G[d_n,\,2n-2] = 2$ if and only if either $G = G_3[K_t, K_1, K_t]$, with {$1 \leq t \leq 2$}, or $G = G_4[K_1, K_1, K_1, K_1]$.
\end{lemma}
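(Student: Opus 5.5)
We prove the two directions separately. Sufficiency is a direct computation with Lemma~\ref{lemma: spectrum of G_s}. Necessity reduces, via independence number and induced subgraphs, to a finite list of small cases.

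\textbf{Sufficiency.} For $G=G_3[K_t,K_1,K_t]$ the graph is two cliques $K_{t+1}$ sharing one vertex, so $n=2t+1$ and $d_n=t$. The blocks $B_1,B_3$ have degree $t$, so each contributes the eigenvalue $t-1$ with multiplicity $t-1$; these lie below $d_n$. The quotient matrix is
\[
B=\begin{pmatrix} 2t-1 & 1 & 0\\ t & 2t & t\\ 0 & 1 & 2t-1\end{pmatrix}.
\]
It has eigenvalue $2t-1$ from the antisymmetric vector $(1,0,-1)$. On the symmetric part it reduces to $\begin{pmatrix}2t-1&1\\2t&2t\end{pmatrix}$, whose eigenvalues are the roots of $x^2-(4t-1)x+(2t-1)2t-2t=0$. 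For $t=1$ (the path $P_3$) the spectrum is $\{3,1,0\}$ and $d_n=1$, so exactly two eigenvalues lie in $[1,4]$. For $t=2$ the spectrum is $\{3,(7\pm\sqrt{17})/2,1,1\}$ and $d_n=2$; the smaller root is about $1.44<2$, so again the count is $2$. For $G_4=P_4$ the spectrum is $\{2+\sqrt2,2,2-\sqrt2,0\}$ and $d_n=1$, which also gives count $2$.

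\textbf{Necessity, first reduction.} Suppose $m_G[d_n,2n-2]=2$. By Lemma~\ref{teorema: Primeiro do Bo Zhou}, $\alpha(G)\le 2$. For each $s\ge 5$, Remark~\ref{obs: recursao} shows that $G_5$ is an induced subgraph of $G_s$, and $G_5$ contains the independent set $\{v_1,w_2\}$. I would instead exhibit in $G_5$ an induced subgraph $H$ with three eigenvalues in $[d_n(G),2n-2]$. This does not follow directly from Corollary~\ref{cor: subgrafo induzido}, because $d_n$ of $G$ can exceed $d_n$ of $H$. So for the blow-ups I would argue directly with the spectrum.

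\textbf{Necessity, counting eigenvalues.} The $d_n$ of $G$ is attained at the block of $v_1$ (or $w_1$), since those vertices have nested-minimal closed neighbourhoods; thus $d_n=\min(\deg_{1},\deg_{w_1})$. The block eigenvalues $\deg_i-1$ with $\deg_i-1\ge d_n$ are counted directly. So for any block $i$ with $t_i\ge 2$ and $\deg_i>d_n$, the eigenvalue $\deg_i-1$ already lies in the interval. The remaining eigenvalues come from $B$. Here the key tool is Weyl's inequality (Lemma~\ref{lema:Weyl}) applied to $Q=D+A$ restricted to the quotient. Alternatively, one can count the eigenvalues of $B$ that are at least $d_n$ by computing the inertia of $B-d_nI$ (Lemma~\ref{teo: inercia}), i.e.\ the number of nonnegative eigenvalues of the symmetrized matrix $T^{1/2}(B-d_nI)T^{-1/2}$. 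The aim is to show that for $s\ge5$ at least three eigenvalues of $B$ are at least $d_n$. I would do this by exhibiting a $3$-dimensional test subspace, supported on blocks $v_{\lceil s/2\rceil}$, $w_{\lfloor s/2\rfloor}$ and a middle block, on which the Rayleigh quotient of $Q$ is at least $d_n$.

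\textbf{Necessity, $s=3,4$.} For $s\in\{3,4\}$, $G_s$ is $P_3$ or $P_4$ and the blow-ups form a few explicit families.
\begin{itemize}
\item $s=3$: compute $B$ for general $(t_1,t_2,t_3)$. If $t_2\ge2$, the block eigenvalue $\deg_2-1=n-2\ge d_n$ adds a third eigenvalue. If $t_1\ne t_3$, or $t\ge 3$, then one more eigenvalue of $B$ crosses $d_n$.
\item $s=4$: any $t_i\ge2$ raises the count to at least $3$, by the block eigenvalues or by a similar $B$-inertia calculation.
\end{itemize}

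The main obstacle is the uniform treatment of arbitrary block sizes $t_i$, both for $3\le s\le 12$ with $s\ge 5$ and for the $s=3$ case with $t\ge 3$. For this I would rely on the inertia of $B-d_nI$ together with the monotonicity of the eigenvalues as the $t_i$ grow, using Lemma~\ref{lem: menos_uma_aresta} via edge additions.
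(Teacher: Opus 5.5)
Your sufficiency computations are correct, but the necessity direction is not proved. It is a list of intentions, and the decisive case is the one you yourself call the ``main obstacle''. For $s\ge5$ you never say what the ``middle block'' is, and you never check a Rayleigh-quotient bound. The fallback you suggest cannot work either: Lemma~\ref{lem: menos_uma_aresta} concerns adding edges on a fixed vertex set, whereas enlarging a block adds vertices and raises $d_n$, so the threshold moves along with the eigenvalues.

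The missing idea is the triangle argument the paper gets from Lemma~\ref{lem: k3comd3}. Suppose $u,v,w$ are pairwise adjacent with $d_u,d_v,d_w\ge d_n+1$. Then the principal submatrix of $Q$ on them is $\operatorname{diag}(d_u,d_v,d_w)+J-I\succeq d_nI$, where $J$ is the all-ones matrix, so $q_3(G)\ge d_n$ by Lemma~\ref{lema: principal submatrix}. For $s\ge5$, take one vertex in each of the blocks of $v_{\lceil s/2\rceil-1}$, $v_{\lceil s/2\rceil}$ and $w_{\lfloor s/2\rfloor}$.
\begin{itemize}
\item They are pairwise adjacent: by the nesting in Definition~\ref{def:Gs}, any $v_i$ with a neighbour among the $w_j$ is adjacent to $w_{\lfloor s/2\rfloor}$.
\item By the strict nesting, each closed neighbourhood strictly contains that of a vertex in the $v_1$- or $w_1$-block, so each degree is at least $d_n+1$.
\end{itemize}
This is uniform in the $t_i$. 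Normalized block indicators also work, since the compressed matrix is $\operatorname{diag}(\deg_i-1)+ww^T$ with $w=(\sqrt{t_i})_i$. That gives exactly the test subspace you were after, and it also shows $B$ has three eigenvalues $\ge d_n$.

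The $s=3,4$ items are also only asserted, and one is false as stated.
\begin{itemize}
\item ``Adds a third eigenvalue'' presupposes that $B$ itself has two eigenvalues $\ge d_n$. This is true: use the indicators of the nonadjacent $v_1$- and $w_1$-blocks inside the $Q$-invariant block-constant subspace. But you do not show it.
\item For $t_2=1$, $t_1<t_3$, the extra eigenvalue need not come from $B$. For $G_3[K_1,K_1,K_2]$ the quotient matrix has characteristic polynomial $(x-2)(x^2-5x+2)$, with only two roots $\ge d_n=1$. The third eigenvalue in $[d_n,2n-2]$ is the block eigenvalue $\deg_3-1=t_3-1\ge t_1=d_n$; the paper's induced $K_{t_3+1}$ captures this.
\item For $s=4$ with $t_2=t_3=1$ and $r=\min\{t_1,t_4\}\ge2$, the omitted computation is the paper's. $G_4[K_r,K_1,K_1,K_r]$ is induced in $G$ and $d_n(G)=r$. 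Its quotient eigenvalues $2r$ and $\tfrac{3r+1\pm\sqrt{r^2-2r+9}}{2}$ are all $\ge r$ exactly when $r\ge2$. So Corollary~\ref{cor: subgrafo induzido} with $a=d_n(G)$ gives count at least $3$.
\item If $\min\{t_1,t_4\}=1<\max\{t_1,t_4\}$, the block eigenvalue $\max\{t_1,t_4\}-1\ge1=d_n$ suffices.
\end{itemize}
Your worry about that corollary is unfounded. You only need an induced $H$ with three eigenvalues at least $d_n(G)$, not at least $d_n(H)$.
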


\begin{proof} 
First, consider graphs $G^{\prime} = G_3[K_{t}, K_{1}, K_{t}]$ and $G^{\prime \prime} = G_4[K_{r}, K_{1}, K_{1}, K_{r}]$. By Lemma~\ref{lemma: spectrum of G_s}, part of the eigenvalues of $G^{\prime}$ and $G^{\prime \prime}$ come from their respective reduced matrices, say $B_3$ and $B_4$, which are given by:

\[
B_3 = \begin{pmatrix}
2t - 1 & 1 & 0 \\
t & 2t & t \\
0 & 1 & 2t - 1
\end{pmatrix},
\]
with spectrum $\operatorname{Spec}(B_3) = \left\{2t-1, \frac{4t - 1 \pm \sqrt{1 +8t}}{2}\right\}$, and 

\[
B_4 = \begin{pmatrix}
2r-1 & 1 & 0 & 0 \\
r & r+1 & 1 & 0 \\
0 & 1 & r+1 & r \\
0 & 0 & 1 & 2r-1
\end{pmatrix},
\]
with spectrum 
\begin{equation}\label{eq:B4}
\operatorname{Spec}(B_4) = \left\{2r, r-1, \frac{3r + 1 \pm \sqrt{r^2-2r +9}}{2}\right\}.
\end{equation}
As a consequence of Definition~\ref{def:Gs}, the minimum degree of $G^{\prime}$ is $t$ and the minimum degree of $G^{\prime \prime}$ is $r$. By their spectra, we obtain that $m_{G^{\prime}}[d_n,\, 2n - 2] = 2$ and $m_{G^{\prime \prime}} [d_n, 2n - 2] = 2$ if and only if $t \leq 2$ and $r = 1$. It completes the proof of one direction.

Now, let us assume that $m_G[d_n,\, 2n-2] = 2$ with $s = 3$, where $G = G_3[K_{t_1}, K_{t_2}, K_{t_3}]$, and define $t = \min\{t_1, t_3\}$.  If $t_2 \geq 3$, then any three vertices of $K_{t_2}$ have degree at least $d_3$. 
By Lemma~\ref{lem: k3comd3}, it follows that
\[
q_3(G) \geq d_3 - 1 \geq d_n,
\]
which is a contradiction. Therefore, we must have $t_2 \leq 2$. If $t_2 = 2$, we have $G = G_3[K_{t_1}, K_{2}, K_{t_3}]$, and $d_n(G) = \min\{t_1+1, t_3+1\}=t+1.$ Let $B'$ be the quotient matrix of the subgraph $G^{\prime} = G_3[K_t, K_2, K_t]$ obtained from Lemma~\ref{lemma: spectrum of G_s}:
\[
B^{\prime} =
\begin{pmatrix}
2t & 2 & 0 \\
t & 2t + 2 & t \\
0 & 2 & 2t
\end{pmatrix}.
\]
The spectrum of $B^{\prime}$ is given by 
\[
\operatorname{Spec}(B')
=
\left\{
2t,\;
2t + 1 - \sqrt{4t + 1},\;
2t + 1 + \sqrt{4t + 1}
\right\},
\]
and $B^{\prime}$ has at least two eigenvalues in the interval $[d_n,\, 2n-2]$. 
 By Lemma~\ref{lemma: spectrum of G_s}, $\deg_{2} - 1 = 2t \in \operatorname{Spec}(G')$ and $\deg_{2} - 1 \ge d_n$. It follows that $m_{G'}[d_n,\, 2n-2] \ge 3.$ By Corollary~\ref{cor: subgrafo induzido}, this implies that $m_{G}[d_n,\, 2n-2] \ge 3$, a contradiction.

Let $t_2=1$ and $t_{1} < t_{3}$. Observe that $H = K_{t_{3} + 1}$ is an induced subgraph of $G$, and $m_{H}[d_n,\, 2n-2] = t_{3} + 1$. Since $1 \leq t_1 < t_3$, we have that $t_3 \geq 2,$ which implies that $m_{H}[d_n,\, 2n-2] = t_{3} + 1 \geq 3.$ By Corollary~\ref{cor: subgrafo induzido}, this implies that
$m_{G}[d_n,\, 2n-2] \ge 3,$
contradicting the hypothesis. If $t = t_1 = t_3$, then $G = G_3[K_t, K_1, K_t]$, and $m_G[d_n,\, 2n-2] = 2$ if and only if $t \leq 2.$

{
Now, let $m_G[d_n, 2n-2] = 2$ and $s=4$, where $G = G_4[K_{t_1}, K_{t_2}, K_{t_3}, K_{t_4}]$. 
Let $t_2 \geq 2$. The degrees of four parts in the graph  $G = G_4[K_{t_1}, K_{t_2}, K_{t_3}, K_{t_4}]$, are ${t_1} + {t_2} - 1$, ${t_1} + {t_2} + {t_3} - 1$, ${t_2} + {t_3} + {t_4} - 1$ and ${t_3} + {t_4} - 1$. Clearly, ${d_n} \in \{ {t_1} + {t_2} - 1,{t_3} + {t_4} - 1\} $. If ${d_n} = {t_1} + {t_2} - 1$, then ${t_1} + {t_2} - 1 \le {t_3} + {t_4} - 1$. Thus ${t_1} + {t_2} \le {t_3} + {t_4}$. Now, we show that, $\min \{ {t_1} + {t_2} + {t_3} - 1,\,\,{t_2} + {t_3} + {t_4} - 1\}  \ge {d_n} + 1 = {t_1} + {t_2}$. Clearly, ${t_1} + {t_2} + {t_3} - 1 \ge {t_1} + {t_2}$. Since  $ {t_3} + {t_4} \ge {t_1} + {t_2}$, we have  $ {t_3} + {t_4} -{t_1}\ge   {t_2}\ge1$. So $ {t_2} + {t_3} + {t_4}-{t_1}\ge   {t_2}+1$ or equivalently ${t_2} + {t_3} + {t_4} - 1 \ge {t_1} + {t_2}$.  If ${d_n} = {t_3} + {t_4} - 1$, then similarly, one can see that, $\min \{ {t_1} + {t_2} + {t_3} - 1,\,\,{t_2} + {t_3} + {t_4} - 1\}  \ge {d_n} + 1$. These imply that $d_3  \geq d_n+1$. Let $d({v_i}) = {d_i}$, for $i = 1, \ldots ,n$ and consider $S = \{ {v_1},{v_2},{v_3}\} $. If the induced subgraph on $S$ is not isomorphic to $P_3$, then by Lemma \ref{lem: k3comd3}, $q_3(G) \geq d_3 - 1 \geq d_n$. Thus assume that this induced subgraph is $P_3$. Clearly, every induced subgraph of $G = G_4[K_{t_1}, K_{t_2}, K_{t_3}, K_{t_4}]$, isomorphic meets to $P_3$ at least one vertex of $K_{t_2}$. Now, since ${t_2} \ge 2$, one can choose three vertices of $G$, say $u$, $v$ and $w$ such that $d(u) \ge d(v) \ge d(w) \ge {d_n}$ and $uvw$ forms a triangle. Now, by Lemma~\ref{lem: k3comd3}, $q_3(G) \geq d_3 - 1 \geq d_n$, which is a contradiction. If $t_3 \geq 2,$ the same idea can be applied. So, we have that $t_2 = t_3 = 1,$ and consequently $G = G_4[K_{t_1}, K_1, K_1, K_{t_4}]$ with either $d_{n} = t_1$ or $d_{n} = t_4$. If $r = \min\{t_1, t_4\}$, $d_n = r$ and the graph $G^{\prime \prime} = G_4[K_{r}, K_1, K_1, K_{r}]$ is an induced subgraph of $G$. Taking $r\geq 2$, it is clear that $m_{G''}[d_n,\, 2n-2] \geq 3$ from the spectrum in \eqref{eq:B4}. By Corollary \ref{cor: subgrafo induzido}, $m_G[d_n,\, 2n-2] \geq 3$, which is a contradiction. With no lose of generality assume that ${t_1} \le {t_4}$.
Therefore, $G = G_4[K_1, K_1, K_1, K_{t_4}]$. By Lemma \ref{lemma: spectrum of G_s}, since $m_G[d_n,\, 2n-2] =2$, we have ${t_4} \le 3$. Clearly, $G = G_4[K_1, K_1, K_1, K_1]$.
}

Now, we show that if $s \geq 5$, then $m_G[d_n, 2n-2] \geq 3$, where $G \cong G_s[K_{t_1}, \ldots, K_{t_s}]$. Let $u$, $v$ and $w$ be vertices of $G$ whose degrees are at least $d_3$.  Note that those vertices cannot be the vertices of minimum degree by Definition ~\ref{def:Gs} since they do not have the minimum degree. Let $X$ denote the set of vertices of $G$ associated with the vertices of the clique $K_{\lceil n/2 \rceil}$, and let $Y$ denote the set of vertices of $G$ associated with the vertices of the clique $K_{\lfloor n/2 \rfloor}$. There are two possible distributions of the vertices $u$, $v$ and $w$. The first possibility is that all three vertices belong to the same partition, either $X$ or $Y$. In this case, $u$, $v$ and $w$ induce a $K_3$. By Lemma \ref{lem: k3comd3},
$q_3(G) \geq d_3 - 1 \geq d_n$, that is, $m_G[d_n,\, 2n-2] \geq 3$, which yields a contradiction.

The second possibility is that, without loss of generality, $u, v \in X$ and $w \in Y$. In this case, we may assume that $u$ and $v$ belong to the cliques associated with the vertices $v_{\lceil n/2 \rceil - 1}$ and $v_{\lceil n/2 \rceil}$, and that $w$ belongs to the clique associated with $w_{\lfloor n/2 \rfloor}$. Again, the vertices $u$, $v$ and $w$ induce a $K_3$, and the result follows from Lemma \ref{lem: k3comd3}.
The argument is analogous for the case where $u, v \in Y$ and $w \in X$.
\end{proof}

\begin{theorem}
Let \( G \) be a graph with degree sequence \( d_1 \geq \cdots \geq d_n \). Then \( m_G [d_n, 2n-2] = 2 \) if and only if {$
G \in \{ 2K_{\frac{n}{2}}, P_3, P_4, G_3[K_2, K_1, K_2] \}.
$}

\end{theorem}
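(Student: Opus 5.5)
The plan is to reduce the general case to graphs with few positive adjacency eigenvalues and then apply Lemma~\ref{teorema: Oboudi} together with Lemma~\ref{lema: g3 e g4}. The link is the identity $Q(G) - d_n I = (D - d_n I) + A(G)$, where $D - d_n I$ is positive semidefinite. By Lemma~\ref{lema:Weyl} with $B = D-d_nI$, we get $q_i(G) \ge d_n + \lambda_i(G)$. So if $m_G[d_n,2n-2]=2$, then $q_3(G)<d_n$, and hence $\lambda_3(G) < 0$. Thus Lemma~\ref{teorema: Oboudi} applies.

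Its case (3) gives $G=K_n$, which has $m=1$ by Lemma~\ref{corolario: grafo completo}, so it is excluded. In case (2), the first option is $G\cong K_1\cup K_{n-1}$. Here $d_n=0$, and the spectrum $\{2n-4, (n-3)^{[n-2]}, 0\}$ puts all $n$ eigenvalues in $[0,2n-2]$. Hence $m=n$, which equals $2$ only for $n=2$, giving $2K_1$. The second option is $G\cong K_n-e$ with $n\ge3$. Then $d_n=n-2$, and Lemma~\ref{lema: quase completo} gives $n-2^{[n-2]}$, $q_+$ and $q_-$. Counting those $\ge n-2$, we get $m\ge n-1$, plus one more if $q_-\ge n-2$. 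Checking $n=3$ gives $K_3-e=P_3$, where $q_-=(3-3)/2=0<1$, so $m=2$. For $n\ge4$ we get $m\ge3$.

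In case (1), either $G\cong K_p\cup K_q$ with $p,q\ge2$, or $G\cong G_s[K_{t_1},\dots,K_{t_s}]$ with $3\le s\le 12$. For $K_p\cup K_q$ with $p\le q$, we have $d_n=p-1$. The eigenvalues are $2p-2$, $2q-2$, $(p-2)^{[p-1]}$ and $(q-2)^{[q-1]}$. Those $\ge p-1$ are $2p-2$, $2q-2$, and the $q-1$ copies of $q-2$ when $q-2\ge p-1$. So $m=2$ exactly when $q=p$, giving $2K_{n/2}$; note $2K_1$ is also of this form. For the family $G_s[\cdots]$, Lemma~\ref{lema: g3 e g4} gives precisely $G_3[K_1,K_1,K_1]=P_3$, $G_3[K_2,K_1,K_2]$ and $G_4[K_1,K_1,K_1,K_1]=P_4$. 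Collecting all cases yields exactly the list in the statement.

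The converse direction is verification. For $2K_{n/2}$, $P_3$ and $G_3[K_2,K_1,K_2]$, the computations above and the $B_3$ spectrum in the proof of Lemma~\ref{lema: g3 e g4} give $m=2$. For $P_4$ the $B_4$ spectrum with $r=1$ gives $\{2,0,2\pm\sqrt2\}$ and $d_n=1$, so $m=2$.

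The main obstacle is the first step: justifying $\lambda_3(G)<0$ from $q_3(G)<d_n$. The direction of the Weyl inequality must be used correctly, with the PSD diagonal shift bounding $q_i$ from below. One must also be careful about disconnected graphs, since Lemma~\ref{teorema: Oboudi} applies to all graphs. Finally, the degenerate case of a graph with an isolated vertex, such as $K_1\cup K_{n-1}$, needs separate care.
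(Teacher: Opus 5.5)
Your proof is correct and follows the paper's route. The Weyl bound $q_3(G)\ge d_n+\lambda_3(G)$ forces $\lambda_3(G)<0$; Oboudi's classification (Lemma~\ref{teorema: Oboudi}) then yields the candidate families, the same eliminations remove $K_1\cup K_{n-1}$, $K_n-e$ for $n\ge 4$, and $K_p\cup K_q$ with $p\neq q$, and Lemma~\ref{lema: g3 e g4} settles the $G_s[K_{t_1},\dots,K_{t_s}]$ case. The differences are cosmetic: you exclude $K_n$ via case (3) of Lemma~\ref{teorema: Oboudi} instead of the bound $\alpha(G)\le 2$, and you should check $n\le 2$ directly, since $q_3$ and $\lambda_3$ do not exist there, rather than obtaining $2K_1$ from the $K_1\cup K_{n-1}$ branch.
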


\begin{proof}
If $
G \in \{ 2K_{\frac{n}{2}}, P_3, P_4, G_3[K_2, K_1, K_2]\}$ we can easily obtain that\linebreak \( m_G [d_n, 2n-2] = 2 \), and one side of the proof is complete. 

Now, assume that \( m_G [d_n, 2n-2] = 2 \). From Lemma~\ref{teorema: Primeiro do Bo Zhou} $\alpha(G) \leq 2$. Suppose that $\alpha(G) = 1$. In this case, $G \cong K_n$. By Lemma~\ref{corolario: grafo completo}, we have $m_G[d_n, 2n-2] = 1$, which is a contradiction. 

Consider that $\alpha(G) = 2$. If \( n \leq 2 \), we have \( G \in \{2K_1, P_2\} \). For \( n \geq 3 \), by using Lemma~\ref{lema:Weyl} with $i = 3$ and $j = n$, we get 
\[
\lambda_3(G) \leq q_3(G) - d_n,
\]
where $\lambda_i(G)$ is the $i$-th largest eigenvalue of the adjacency matrix $A(G).$
%
%
%
Since \( m_G [d_n, 2n-2] = 2 \), it follows that \( q_3(G) < d_n(G) \). Thus, \( \lambda_{3}(G) < 0 \), and by Lemma~\ref{teorema: Oboudi}, we have two remaining cases: $\lambda _2(G) = 0$ and $\lambda _2(G) > 0$. For the first case, $G \cong K_1 \cup K_{n-1}$ or $G \cong K_n - e$. If $G \cong K_1 \cup K_{n-1}$, then $d_n = 0$ and consequently $m_G[d_n,\,2n-2] \geq 3$, which is a contradiction. If $G \cong K_n - e$ and $n = 3$, it follows that $G \cong P_3$. If $n \geq 4$, then $d_n = n-2$ and by Lemma~\ref{lema: quase completo}, $m_G[d_n, 2n-2] \geq 3$, which is a contradiction. 

Let $\lambda_2 (G) > 0$. In this case, we have that $G \cong K_p \cup K_q$ for some integers $p, q \geq 2$ or there exist other positive integers $s$ and $t_1, \ldots, t_s$ such that\linebreak $3 \leq s \leq 12$, $t_1 + \cdots + t_s = n$ and $G \cong G_s[K_{t_1}, \ldots, K_{t_s}]$. Suppose that $G \cong K_p \cup K_q$ with $p \geq q$. In this case, if $p = q$, then $G \cong 2K_{\frac{n}{2}}$. Now, if $p \geq q + 1$, it follows that $m_G[d_n, 2n - 2] = p + 1 \geq 3,$ which is a contradiction.

Finally, assume that $G \cong G_s[K_{t_1}, \ldots, K_{t_s}]$. By means of Lemma~\ref{lema: g3 e g4}, the result follows, and the proof is complete.
\end{proof}

\begin{lemma}\label{e} 
Let $S$ be a set of vertices in a graph $G$ such that each vertex in $S$ has at least $e$ neighbors outside $S$. Let $\left| S \right| = m$, $m > 0$. Then ${q_m} \ge e$.
\end{lemma}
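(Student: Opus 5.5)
We will prove $q_m(G)\ge e$ with Cauchy interlacing (Lemma~\ref{lema: principal submatrix}), comparing $Q(G)$ with its principal submatrix on $S$. Write $Q_S$ for the $m\times m$ principal submatrix of $Q(G)$ indexed by $S$. Then
\[
Q_S = D_S + A(G[S]),
\]
where $D_S$ is the diagonal matrix of the degrees $d_G(v)$, $v\in S$, taken in the whole graph $G$.

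The first step is to split $D_S$. For each $v\in S$ we have $d_G(v)=d_{G[S]}(v)+e_v$, where $e_v$ is the number of neighbours of $v$ outside $S$. By hypothesis $e_v\ge e$ for every $v \in S$. Hence
\[
Q_S = Q(G[S]) + E,
\]
where $E=\operatorname{diag}(e_v)_{v\in S}$ satisfies $E \succeq eI_m$.

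The second step bounds the spectrum of $Q_S$ from below. Since $Q(G[S])$ is a signless Laplacian, it is positive semidefinite. Therefore
\[
Q_S \succeq eI_m ,
\]
so every eigenvalue of $Q_S$ is at least $e$, and in particular $\lambda_m(Q_S)\ge e$. One can also obtain this from Lemma~\ref{lema:Weyl}, taking $A=Q(G[S])$ and $B=E$ with the lower inequality at $i=j=m$, which gives $\lambda_m(Q_S)\ge \lambda_m(Q(G[S]))+\lambda_m(E)\ge 0+e$.

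The third step applies Lemma~\ref{lema: principal submatrix} with $M=Q(G)$, $B=Q_S$, $p=m$ and $i=m$. This gives
\[
q_m(G)=\lambda_m(Q(G))\ge \lambda_m(Q_S)\ge e ,
\]
which is the claim.

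There is no substantive obstacle. The only point needing care is the upper half of the interlacing inequality: we want $\lambda_i(B)\le\lambda_i(M)$ at index $i=m$, so that the smallest eigenvalue of the $m\times m$ block bounds the $m$-th largest eigenvalue of $Q(G)$ from below. The other point is to remember that the diagonal entries of $Q_S$ are the full degrees in $G$, not the degrees in $G[S]$; this is exactly what produces the extra term $E\succeq eI_m$.
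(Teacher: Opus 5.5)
Your proposal is correct and follows essentially the same approach as the paper: you split $Q_S = Q(G[S]) + E$, use positive semidefiniteness of $Q(G[S])$ and $E \succeq eI_m$ to get $\lambda_m(Q_S)\ge e$, and then apply Cauchy interlacing (Lemma~\ref{lema: principal submatrix}) at $i=m$. The paper argues exactly this way, and your Weyl-inequality alternative for the middle step is also valid.
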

\begin{proof}
Let the principal submatrix ${Q_S}$ of $Q$ with rows and columns indexed by $S$. Let $Q(S)$ be the signless Laplacian of the subgraph induced on $S$. Then ${Q_S} = Q(S) + {D(S)}$, where ${D(S)}$ is the diagonal matrix such that $D{(S)_{ss}}$ is the number of neighbors of $s$ outside $S$. Since $Q(S)$ is positive semidefinite and $D(S) \ge eI$, all eigenvalues of ${Q_S}$ are not smaller than $e$. Thus by Lemma \ref{lema: principal submatrix}, ${q_m} \ge e$.
\end{proof}
\begin{lemma}\label{uniq vertex of min}
 Let $G$ be a graph of order $n$ with a unique vertex of minimum degree and $m_{G}[d_{n-1}, 2n-2] = 1$. If $S$ is a maximal independent set and $|S|= 2$, then the vertex with minimum degree is contained in $S$.
\end{lemma}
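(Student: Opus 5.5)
The plan is to argue by contradiction, using Lemma~\ref{e} with the two-element independent set $S$ itself. Let $v$ be the unique vertex of minimum degree. Suppose $S=\{a,b\}$ is a maximal independent set with $|S|=2$ and $v\notin S$. I aim to show $q_2(G)\ge d_{n-1}$, which gives $m_G[d_{n-1},\,2n-2]\ge 2$ and contradicts the hypothesis.

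First, I bound the degrees of $a$ and $b$. In the ordering $d_1\ge\cdots\ge d_n$, the value $d_n$ is attained only at $v$. Every vertex other than $v$ therefore occupies one of the positions $1,\dots,n-1$, so its degree is at least $d_{n-1}$. Since $a,b\neq v$, we get $d_G(a),d_G(b)\ge d_{n-1}$. This is the only place where the uniqueness of the minimum-degree vertex is used.

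Second, I count neighbours outside $S$. Because $S$ is independent, $a$ and $b$ are not adjacent. Hence every neighbour of $a$, and likewise of $b$, lies outside $S$. So each vertex of $S$ has at least $e:=d_{n-1}$ neighbours in $V(G)\setminus S$.

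Applying Lemma~\ref{e} with $m=|S|=2$ and $e=d_{n-1}$ then gives $q_2(G)\ge d_{n-1}$. Since $q_1(G)\ge q_2(G)$ and all signless Laplacian eigenvalues lie in $[0,\,2n-2]$, both $q_1$ and $q_2$ lie in $[d_{n-1},\,2n-2]$. This contradicts $m_G[d_{n-1},\,2n-2]=1$, so $v\in S$.

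I do not expect a real obstacle. The one point needing care is the degree bound in the first step: it relies on $v$ being the unique vertex of degree $d_n$, so that every other vertex has degree at least $d_{n-1}$. Maximality of $S$ is not actually needed; only independence and $|S|=2$ are used.
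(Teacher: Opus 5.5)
Your proof is correct and is essentially the paper's argument: if the minimum-degree vertex is not in $S$, both vertices of $S$ have degree at least $d_{n-1}$ with all their neighbours outside $S$, so Lemma~\ref{e} with $m=2$ gives $q_2(G)\ge d_{n-1}$, contradicting $m_G[d_{n-1},\,2n-2]=1$. Your remark that maximality of $S$ is never used is also accurate, since the paper's proof likewise relies only on independence and $|S|=2$.
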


\begin{proof}
 Let ${v} \in V(G)$ and ${d_G}(v) = {d_n}$. Suppose that $v \notin S$. Then every vertex in $S$ has at least ${d_{n - 1}}$ neighbors outside $S$. Thus by Lemma \ref{e}, ${q_2}(G) \ge {d_{n - 1}}$, a contradiction.
\end{proof}

\begin{lemma}\label{alpha (G) - 1}
 If $G$ is a graph of order $n$ with exactly one vertex of minimum degree, then ${m_G}\left[ {{d_{n - 1}},\,2n - 2} \right] \ge \alpha (G) - 1$.
\end{lemma}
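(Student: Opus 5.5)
Let $v$ be the unique vertex of minimum degree, so $d_G(v)=d_n<d_{n-1}$, and every vertex other than $v$ has degree at least $d_{n-1}$. Let $S$ be a maximum independent set of $G$, so $|S|=\alpha(G)$. The plan is to find $\alpha(G)-1$ vertices that are pairwise nonadjacent, avoid $v$, and each have at least $d_{n-1}$ neighbours outside the set. Lemma~\ref{e} then gives $q_{\alpha(G)-1}(G)\ge d_{n-1}$, which is exactly $m_G[d_{n-1},\,2n-2]\ge \alpha(G)-1$. Since $q_1(G)\le 2n-2$, every eigenvalue that is at least $d_{n-1}$ lies in the interval.

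Let $S'$ be $S\setminus\{v\}$ if $v\in S$, and otherwise $S$ with any one vertex removed. In both cases $v\notin S'$ and $|S'|\ge \alpha(G)-1$. If $\alpha(G)\le 1$ the claim is trivial, so assume $|S'|\ge 1$. Because $S'$ is independent, each $u\in S'$ has all of its neighbours outside $S'$. Since $u\neq v$, the number of such neighbours is $d_G(u)\ge d_{n-1}$. Applying Lemma~\ref{e} to $S'$ with $e=d_{n-1}$ and $m=|S'|$ gives $q_{|S'|}(G)\ge d_{n-1}$. Hence $q_1,\dots,q_{|S'|}$ all lie in $[d_{n-1},\,2n-2]$, and the count is at least $|S'|\ge\alpha(G)-1$.

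No step here is a real obstacle. The only point to check is that removing $v$ (or one arbitrary vertex) keeps the set independent while guaranteeing that every remaining vertex has degree at least $d_{n-1}$. This is where the uniqueness of the minimum-degree vertex is used: without it, $d_{n-1}=d_n$ and the same argument would not lose a vertex but would also give nothing new.
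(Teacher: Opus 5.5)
Your proof is correct and is essentially the paper's own argument: remove the minimum-degree vertex (or any one vertex) from a maximum independent set, then apply Lemma~\ref{e} with $e=d_{n-1}$ to the remaining $\alpha(G)-1$ pairwise nonadjacent vertices. One small point: your closing remark overstates the role of uniqueness, since removing a single minimum-degree vertex works verbatim in any graph, so the inequality holds even without that hypothesis.
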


\begin{proof}
 Suppose $S = \left\{ {{v_1}, \ldots ,{v_{\alpha (G)}}} \right\}$ is an independent set. Since ${d_n} \ne {d_{n - 1}}$, $S$ contains at least $\alpha (G) - 1$ vertices of degree greater than or equal to ${d_{n - 1}}$. By Lemma \ref{e}, ${q_{\alpha (G) - 1}} \ge {d_{n - 1}}$. This implies that ${m_G}\left[ {{d_{n - 1}},2n - 2} \right] \ge \alpha (G) - 1$.
\end{proof}

We are ready to prove the next result that characterizes all graphs with $m_{G}[{d_{n-1}},\,2n-2] = 1$.

\begin{theorem}
Let $G$ be a graph of order $n$. Then ${m_G}[{d_{n - 1}},\,2n-2] = 1$ if and only if $G \in \left\{ {{K_n},\,{K_1} \cup {K_{n - 1}}} \right\}$.
\end{theorem}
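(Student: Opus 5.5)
We first check the two graphs directly. For $K_n$ the spectrum is $\{2(n-1)^{[1]},(n-2)^{[n-1]}\}$ and $d_{n-1}=n-1$, so only $2n-2$ lies in $[n-1,2n-2]$. For $K_1\cup K_{n-1}$ the spectrum is $\{2(n-2)^{[1]},(n-3)^{[n-2]},0\}$ and $d_{n-1}=n-2$. Since $2(n-2)\ge n-2$ and $n-3<n-2$, again exactly one eigenvalue lies in the interval. The trivial small cases $n\le 2$ are handled by direct inspection: $K_1$, $K_2$, and $2K_1=K_1\cup K_1$.

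For the converse, assume $m_G[d_{n-1},2n-2]=1$. We split on whether the minimum degree is attained uniquely.

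\emph{Case $d_{n-1}=d_n$.} Then $m_G[d_n,2n-2]=1$, and Lemma~\ref{corolario: grafo completo} gives $G=K_n$.

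\emph{Case $d_{n-1}>d_n$.} Lemma~\ref{alpha (G) - 1} gives $\alpha(G)\le 2$. If $\alpha(G)=1$, then $G=K_n$, which has no unique minimum-degree vertex; so this is impossible. Hence $\alpha(G)=2$. Let $v$ be the unique vertex of minimum degree. Every maximal independent set of size $2$ contains $v$ by Lemma~\ref{uniq vertex of min}.

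We claim that every maximal independent set of $G$ containing $v$ has size $2$. Such a set has size at most $\alpha(G)=2$. If $\{v\}$ alone were maximal, then $v$ would be adjacent to all other vertices and $d_n=n-1$, which is impossible. Now let $x,y$ be two non-adjacent vertices. The set $\{x,y\}$ is independent and, since $\alpha=2$, maximal, so it contains $v$. Therefore every non-edge of $G$ is incident to $v$, and $G-v$ is complete. Thus $G=K_{n-1}$ plus a vertex $v$ with $k=d_n$ neighbours, where $0\le k\le n-2$.

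It remains to rule out $1\le k\le n-2$. In that range $d_{n-1}=n-2$, because a non-neighbour of $v$ has degree $n-2$. Each neighbour of $v$ has degree $n-1$, and each non-neighbour has degree $n-2>k$ when $k\le n-3$; when $k=n-2$ the set of non-neighbours still has degree $n-2$.

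Take $S=V(G)\setminus\{v\}$, which induces $K_{n-1}$. The principal submatrix $Q_S=Q(K_{n-1})+D'$, where $D'$ is the $0/1$ diagonal matrix marking the neighbours of $v$. Its eigenvalues are at least those of $Q(K_{n-1})$, namely $2n-4$ and $(n-3)^{[n-2]}$, and it has trace larger by $k$.

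The cleaner route is to find two vertices with many neighbours outside a $2$-set. Let $u$ be a neighbour of $v$ and $w$ any other vertex of $K_{n-1}$, and put $S=\{u,w\}$. Then $u$ has $n-2$ neighbours outside $S$ and $w$ has at least $n-3$. Lemma~\ref{e} then only gives $q_2\ge n-3$, which is not enough.

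Instead, apply Lemma~\ref{lema: principal submatrix} to the principal submatrix on $\{u,w\}$. Choose $u,w$ both neighbours of $v$ if $k\ge 2$; the submatrix is then $\begin{pmatrix} n-1&1\\1&n-1\end{pmatrix}$, whose smaller eigenvalue is $n-2=d_{n-1}$. Hence $q_2\ge d_{n-1}$, a contradiction.

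If $k=1$, use the triple consisting of $u$ (the neighbour of $v$) and two non-neighbours $w,w'$ in the clique. These need $n\ge 4$; the case $n=3$, where $G=P_3$, is checked directly: its spectrum is $\{3,1,0\}$ with $d_2=1$, giving count $2$. The principal submatrix on $\{u,w,w'\}$ has diagonal $(n-1,n-2,n-2)$ and all off-diagonal entries $1$. Its second eigenvalue is at least $n-2$, since $(0,1,-1)$ is an eigenvector with eigenvalue $n-3$ and the complementary $2\times2$ block has eigenvalues straddling a value at least $n-2$. A careful computation here is the main obstacle: one must show the second eigenvalue of this $3\times 3$ matrix is at least $n-2$.

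If this fails, the fallback is to use Lemma~\ref{lem: menos_uma_aresta}: $G$ is obtained from $K_n$ by deleting the $n-1-k$ edges at $v$, and one tracks how $q_2$ changes under these deletions.

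Hence $k=0$, and $G=K_1\cup K_{n-1}$.
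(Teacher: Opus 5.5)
Your reduction to ``$K_{n-1}$ plus a vertex $v$ with $k$ neighbours'' is correct and agrees with the paper. Your $k\ge 2$ step via the principal submatrix $\begin{pmatrix} n-1&1\\ 1&n-1\end{pmatrix}$ is also fine; it is the special case of the bound $q_2\ge d_2-1$ that the paper uses to force $d_2=\cdots=d_{n-1}$, i.e.\ $k\le 1$.

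The genuine gap is the case $k=1$ with $n\ge 4$, i.e.\ $K_{n-1}$ with a pendant vertex. This is exactly the case the paper settles by computing the quotient matrix $B_3$, whose characteristic polynomial is $(x-n+2)(x^2+(3-2n)x+2n-6)$. Your $3\times 3$ claim is false. Apart from the eigenvalue $n-3$ (eigenvector $(0,1,-1)^T$), the matrix acts on $\operatorname{span}\{e_u,(e_w+e_{w'})/\sqrt2\}$ as $\begin{pmatrix} n-1&\sqrt2\\ \sqrt2&n-1\end{pmatrix}$. So its eigenvalues are $n-1\pm\sqrt2$, and its second eigenvalue is $n-1-\sqrt2<n-2=d_{n-1}$.

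No other choice of principal submatrix can rescue this. For this graph $q_2(G)=n-2$ exactly, and $q_3(G)=n-3$. Moreover $n-2$ is a simple eigenvalue whose eigenvector has full support: value $1$ at $v$, $n-3$ at $u$, and $-1$ on the other $n-2$ vertices. Expanding in an eigenbasis of $Q$ shows that any $2$-dimensional subspace on which $x^TQx\ge (n-2)\|x\|^2$ must contain that eigenvector. Hence every proper principal submatrix has second eigenvalue strictly below $n-2$.

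Your fallback, by contrast, does work, and it is shorter than both your case split and the paper's explicit computation. Lemma~\ref{lem: menos_uma_aresta} says $q_i(H-e)\ge q_{i+1}(H)$. Iterating it along the $n-1-k$ edges at $v$ removed from $K_n$ gives $q_2(G)\ge q_{n+1-k}(K_n)=n-2=d_{n-1}$ for every $1\le k\le n-2$, hence $m_G[d_{n-1},2n-2]\ge 2$. As submitted, however, this is only announced (``one tracks how $q_2$ changes''), not carried out, so the case $k=1$ is unproved.

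A smaller point concerns your check for $n=2$. There $2K_1$ has $d_1=d_2=0$, and both eigenvalues $0$ lie in $[0,2]$. So direct inspection shows it does \emph{not} satisfy the condition, and the member $K_1\cup K_{n-1}$ of the list needs $n\ge 3$. The paper's statement shares this edge case.
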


\begin{proof}
If ${d_n} = {d_{n - 1}}$, then by Lemma \ref{corolario: grafo completo}, $G = {K_n}$. So suppose ${d_n} \ne {d_{n - 1}}$. By Lemma \ref{alpha (G) - 1}, $\alpha (G) \le 2$. If $\alpha (G) = 1$, then ${d_n} = {d_{n - 1}}$, a contradiction. Now, assume that $\alpha (G) = 2$. Let $v \in V(G)$ and ${d_G}(v) = {d_n}$. By Lemma \ref{uniq vertex of min}, any two vertices of the set $V(G)\backslash \{ v\} $ are adjacent. By Lemma \ref{{q_2}{d_2} - 1}, ${q_2(G)} \ge {d_2} - 1$. If ${d_2} - 1 \ge {d_{n - 1}}$, we have a contradiction. Thus ${d_2} - 1 < {d_{n - 1}}$, so we obtain 
$${d_2} = \cdots = {d_{n - 1}}.$$
Now, two cases occur:

 \noindent\textbf{Case 1:} If ${d_1} = {d_2}$, then $G = {K_1} \cup {K_{n - 1}}$. We find that ${m_G}\left[ {{d_{n - 1}},2n - 2} \right] = 1$.
 
 \noindent\textbf{Case 2:} If ${d_1} \ne {d_2}$, then $G$ is the graph obtained from ${K_{n - 1}}$ by joining one of its vertices to a new vertex. Thus $G = {G_3}\left[ {{K_1},{K_1},{K_{n - 2}}} \right]$. By Lemma \ref{lemma: spectrum of G_s}, some of the eigenvalues of $G$ come from its respective reduced matrix, denoted by ${B_3}$, which is given by:
\[
B_3 = \begin{pmatrix}
1 & 1 & 0 \\
1 & n-1 & n-2 \\
0 & 1 & 2n - 5
\end{pmatrix}.
\]

The characteristic polynomial of ${B_3}$ is 
$$\left( {x - n + 2} \right)\left( {{x^2} + (3 - 2n)x + 2n - 6} \right).$$
We have ${d_{n - 1}}(G) = n - 2$. It is easy to verify that this characteristic polynomial ${B_3}$ has $2$ roots greater than or equal to ${d_{n - 1}}(G)$, a contradiction. Hence, the proof is complete.
\end{proof}

\subsection{Distribution over the intervals $[0,\,d_1]$ and $[0,\,d_3]$
} \label{subsec:32}
Next, we prove that there is no graph of order $n \geq 3$ such that\linebreak \( m_G[0,\, d_1] = 1 \).

\begin{lemma}\label{lem: k_2}
Let \( G \) be a graph of order $n \geq 2$. Then \( m_G[0,\, d_1] = 1 \) if and only if \( G = K_2 \).
\end{lemma}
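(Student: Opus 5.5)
Both directions go through Lemma~\ref{teorema: Primeiro do Bo Zhou} and an explicit spectrum computation. For the easy direction, $Q(K_2)$ has spectrum $\{2,0\}$ and $d_1=1$, so exactly one eigenvalue lies in $[0,1]$ and $m_{K_2}[0,d_1]=1$.

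For the converse, suppose $m_G[0,d_1]=1$ with $n\ge 2$.
\begin{enumerate}
\item Lemma~\ref{teorema: Primeiro do Bo Zhou} gives $\alpha(G)\le m_G[0,d_1]=1$, so $\alpha(G)=1$.
\item Hence $G$ is complete, $G=K_n$, and $d_1=n-1$.
\item The signless Laplacian spectrum of $K_n$, already used in the proof of Lemma~\ref{corolario: grafo completo}, is $\{2(n-1)^{[1]},(n-2)^{[n-1]}\}$.
\item Since $n-2<n-1=d_1$, all $n-1$ copies of $n-2$ lie in $[0,d_1]$.
\item For $n\ge 2$ we have $2n-2>n-1$, so the eigenvalue $2n-2$ does not lie in $[0,d_1]$.
\end{enumerate}
Therefore $m_{K_n}[0,d_1]=n-1$, and $m_G[0,d_1]=1$ forces $n=2$, that is, $G=K_2$.

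No step is a real obstacle: the reduction to $K_n$ is immediate from Lemma~\ref{teorema: Primeiro do Bo Zhou}, and the only care needed is the endpoint comparison in step~5. For $n=1$ the statement would fail, since then $d_1=0$ and $q_1=0$; this is why the hypothesis $n\ge 2$ is required.
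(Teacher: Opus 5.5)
Your proof is correct and follows the paper's argument. The bound $\alpha(G)\le m_G[0,d_1]$ forces $G=K_n$, and the eigenvalue $n-2$, which has multiplicity $n-1$, lies in $[0,d_1]$, so $n=2$. You also make explicit the endpoint check $2n-2\notin[0,n-1]$ and note why $n\ge 2$ is needed, both of which the paper leaves implicit.
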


\begin{proof}
One side is clear. Now, we assume that $m_G[0,\, d_1] = 1$. By Lemma~\ref{teorema: Primeiro do Bo Zhou}, $\alpha(G) = 1$. As is well-known, the multiplicity of eigenvalue \( n - 2 \) of \( K_n \) is \( n - 1 \). By the hypothesis, it follows that \( n = 2 \). 
\end{proof}



\begin{theorem} \label{thm:mg2}
Let $G$ be a graph of order $n$. Then \( m_G [0,\, d_{1}] = 2 \) if and only if $G \in \{K_1 \cup K_2,\, P_3,\, K_3,\, 2K_1,\, 2K_2,\, C_5\}$.
\end{theorem}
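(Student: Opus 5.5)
We argue the two directions separately. For the easy direction we simply list the spectra. $K_1\cup K_2$ has spectrum $\{2,0,0\}$ and $d_1=1$. $P_3$ has $\{3,1,0\}$ with $d_1=2$. $K_3$ has $\{4,1,1\}$ with $d_1=2$. $2K_1$ has $\{0,0\}$ with $d_1=0$. $2K_2$ has $\{2,2,0,0\}$ with $d_1=1$. $C_5$ has eigenvalues $2+2\cos(2\pi k/5)$, that is $4,\ 2.618^{[2]},\ 0.382^{[2]}$, with $d_1=2$. In each case exactly two eigenvalues lie in $[0,d_1]$.

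For the converse, assume $m_G[0,d_1]=2$. By Lemma~\ref{teorema: Primeiro do Bo Zhou} we have $\alpha(G)\le 2$.

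\textbf{The case $\alpha(G)=1$.} Here $G=K_n$. The eigenvalue $n-2\le d_1=n-1$ occurs with multiplicity $n-1$, so $n-1\le 2$. Lemma~\ref{lem: k_2} excludes $n=2$, which leaves $K_3$ (and $n=1$ gives only one eigenvalue).

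\textbf{The case $\alpha(G)=2$.} Two eigenvalues lie in $[0,d_1]$, so $q_{n-2}>d_1$. We turn this into a condition on adjacency eigenvalues via Lemma~\ref{lema:Weyl}, applied to $Q=A+D$ with $\lambda_1(D)=d_1$. This gives $q_{n-2}\le \lambda_{n-2}(A)+d_1$, hence $\lambda_{n-2}(A)>0$, so $A(G)$ has at least $n-2$ positive eigenvalues. The trace of $A$ is zero, so the negative eigenvalues must balance the positive ones. Moreover, the eigenvalues are those of a graph with $\alpha\le 2$.

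Alternatively, and more robustly, we pass to a small induced subgraph and use interlacing in the form of Lemma~\ref{lema: principal submatrix}. Here the useful direction runs the other way: the number of eigenvalues of $Q(G)$ in $[0,d_1]$ is at least a quantity computed from a principal submatrix. Indeed, by Lemma~\ref{lema: principal submatrix}, if a principal submatrix $B$ of order $p$ has $k$ eigenvalues $\le d_1$, then $q_{n-p+i}(G)\le\lambda_i(B)$ yields $m_G[0,d_1]\ge k$.

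Concretely, let $v$ have degree $d_1$ and let $N=V(G)\setminus N[v]$. Since $\alpha\le 2$, every such $N$ induces a clique. For the large neighbourhood we use that $\alpha(G[N(v)])\le 2$. The plan is then to take the principal submatrix of $Q$ on $N(v)$, whose entries are $Q(G[N(v)])$ plus a diagonal part bounded by the outside degrees. If $d_1\ge 3$, the graph $G[N(v)]$ on $d_1$ vertices with $\alpha\le2$ contains a clique or a structure forcing three small eigenvalues of this submatrix, each at most $d_1$. This contradicts $m_G[0,d_1]=2$ and so bounds $d_1\le 2$.

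Once $d_1\le 2$, the graph $G$ is a disjoint union of paths and cycles with $\alpha\le 2$, and $n\le 5$. We then check by hand the finite list: $2K_1$, $K_1\cup K_2$, $2K_2$, $P_3$, $P_4$, $C_4$, $P_5$, $C_5$, $K_1\cup P_3$, $K_1\cup K_3$, $K_2\cup K_3$, $2K_3$, and so on. This leaves exactly the graphs in the statement; for example, $P_4$ has eigenvalues $2\pm\sqrt2$ and $2$, so three of them are at most $2$.

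\textbf{Main obstacle.} The hard step is bounding $d_1\le 2$. The trouble is that $G[N(v)]$ may itself be two disjoint cliques, and the outside-degree contributions enlarge the diagonal. My first attempt would be to find three vertices in $N(v)$ whose induced principal submatrix of $Q$ has its three eigenvalues at most $d_1$, using that vertices in a clique of $N(v)$ have structured degrees. Failing that, I would fall back on the adjacency-inertia route from Lemma~\ref{lema:Weyl} above combined with Lemma~\ref{teorema: Oboudi}-type classifications.
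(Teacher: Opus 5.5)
There is a genuine gap. Your check of the six listed graphs and your treatment of $\alpha(G)=1$ are fine. For $\alpha(G)=2$, however, the converse is not proved. The decisive step, bounding $d_1\le 2$, is only asserted (``contains a clique or a structure forcing three small eigenvalues''), and neither of your two routes is carried through.

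The neighbourhood route also cannot work as formulated. The triangular prism $K_3\square K_2$ has $\alpha=2$, $d_1=3$ and signless Laplacian spectrum $\{6,4,3^{[2]},1^{[2]}\}$, so the theorem must exclude it. Yet for every vertex $v$ the set $N[v]$ induces a paw, and the principal submatrix of $Q$ on $N[v]$ is $3I+A(\mathrm{paw})$, with eigenvalues approximately $5.17,\,3.31,\,2,\,1.52$. Only two of these are at most $d_1$. Hence no interlacing argument confined to $N[v]$ can rule this graph out, and $N(v)$ alone is worse (it gives $4,3,2$). Ramsey's $R(3,3)=6$ does settle $d_1\ge 6$: then $N[v]$ contains a $K_4$, whose submatrix is $\preceq (d_1-1)I+J$. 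But $3\le d_1\le 5$ would require looking outside $N[v]$. There is also a slip in your $d_1\le 2$ reduction: a union of paths and cycles with $\alpha\le 2$ can have six vertices ($2K_3$), not at most five.

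The paper avoids this by taking $v$ of \emph{minimum} degree and using the clique $V(G)\setminus N_G[v]$.
\begin{itemize}
\item If this clique contains three vertices $a,b,c$, then $\{v,a,b,c\}$ induces $K_1\cup K_3$. The rank-one inertia argument (Lemma~\ref{teo: inercia}) then gives three eigenvalues of that principal submatrix at most $d_c\le d_1$.
\item If the clique has at most two vertices, then $d_n\ge n-3$. The paper finishes with edge interlacing (Lemma~\ref{lem: menos_uma_aresta}), which is what produces $C_5$, a direct computation for exactly this prism, and two explicit five-vertex induced subgraphs in the $(n-3)$-regular case.
\end{itemize}

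Alternatively, your own Weyl observation can be completed. The condition $\lambda_{n-2}(A)>0$ passes by Cauchy interlacing to induced subgraphs. So $G$ has no induced $K_4$, $C_4$ or $K_1\cup K_3$, since each of these has three non-positive adjacency eigenvalues. Combined with $\alpha(G)\le 2$, this makes $\overline{G}$ triangle-free, claw-free and $2K_2$-free, with $\alpha(\overline{G})\le 3$. Hence $\Delta(\overline{G})\le 2$ and $\overline{G}$ has at most one non-trivial component, which leaves a short finite list to check directly. As submitted, though, the proposal stops before either argument is actually made.
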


\begin{proof}

If $G \in \{K_1 \cup K_2,\, P_3,\, K_3,\, 2K_1,\, 2K_2,\, C_5\}$ the proof follows easily. Assume that $m_G[0, d_1] = 2$. If $n = 2$, then $G \cong 2K_1$. If $n = 3$, then\linebreak $G \in \{K_1 \cup K_2, P_3, K_3\}$. If $n = 4$, then $G \cong 2K_2$. Now, let $n \geq 5$. By Lemma~\ref{teorema: Primeiro do Bo Zhou}, we have that $\alpha(G) \leq 2$. If $\alpha(G) = 1$, then $G \cong K_n$, which is a contradiction. For $\alpha(G) = 2$, let \( v \in V(G) \) such that \( d_G(v) = d_n \). Any pair of vertices in the set \( V(G) \setminus N_G[v] \) is adjacent. Next, we analyze the cases according to the cardinality of the clique whose vertices lie in $V(G)\setminus N[v]$.

 \noindent\textbf{Case 1:} \( |V(G) \setminus N_G[v]| = 1 \). 
Note that if \(d_n \leq 2\), then \(n \leq 4\), which we already analyzed. For \(d_n \geq 3\), note that $d_G(v) = n - 2$ and we know, by Lemma~\ref{lem: menos_uma_aresta}, that $q_2(G) \leq q_2(K_n) = n - 2$. Thus, $m_G[0,\,d_1] \geq 4$, a contradiction.

 \noindent\textbf{Case 2:} \( |V(G) \setminus N_G[v]| = 2 \). 
If \(d_n \leq 1\), then \(n \leq 4\), and that case was already analyzed. Let $d_n = 2$. Since $q_2(G) \leq q_2(K_5) = 3$, by Lemma~\ref{lem: menos_uma_aresta}, we have that $d_1 \leq 2$. Hence, $G \cong C_5$. If $d_n \geq 3$, it is clear that $d_1 \in \{n-2, n-3\}.$ If $d_1 = n-2,$ then $q_2(G) \leq n-2 = d_1,$ which implies $m_{G}[0,\,d_1] = n-1$, a contradiction. Assume that $d_{1} = n-3,$ and note that $d_{n}=n-3,$ and so $G$ is $(n-3)$-regular. 
For \( n = 6 \), the graph \( G \) is depicted in Figure~\ref{fig:n=6} and we have \( m_G[0, d_1] = 4 \), a contradiction. 
\begin{figure}[h]
 \centering
 \vspace{1em}
\begin{tikzpicture}[
    scale=1.4,
    every node/.style={font=\normalsize},
    edge/.style={line width=0.8pt},
    vertex/.style={
        circle,
        draw,
        fill=black,
        inner sep=0pt,
        minimum size=5pt,
        line width=0.8pt
    }
]

\begin{scope}[xshift=-0.5cm]

\node[vertex] (A1) at (-0.5,1) {};
\node[vertex] (A2) at (0.5,1) {};
\node[vertex] (A3) at (-1.1,0) {};
\node[vertex] (A4) at (1.1,0) {};
\node[vertex] (A5) at (0,-0.5) {};
\node[vertex] (A6) at (0,-1.4) {};

\draw[edge] (A1)--(A2);
\draw[edge] (A1)--(A3);
\draw[edge] (A1)--(A5);
\draw[edge] (A2)--(A4);
\draw[edge] (A2)--(A5);
\draw[edge] (A3)--(A4);
\draw[edge] (A3)--(A6);
\draw[edge] (A4)--(A6);
\draw[edge] (A5)--(A6);

\end{scope}

\end{tikzpicture}

 \caption{Graph $G$ for $n = 6$ in Case 2 of Theorem \ref{thm:mg2}.}
 \label{fig:n=6}
\end{figure}


\noindent For \( n \geq 7 \), \( H_1 \) and \( H_2 \) graphs in Figure~\ref{fig:Teorema2-caso2-subgrafos} are possible induced subgraphs of $G$.
\begin{figure}[h]
 \centering
 \vspace{1em}
 \begin{tikzpicture}[scale=1.5,every node/.style={font=\large},line width=0.8pt]
\tikzset{vertex/.style={circle,draw,fill=black,inner sep=0pt,minimum size=5pt}}
\begin{scope}[xshift=-0.5cm]

\node[vertex] (A1) at (-0.8,0) {};
\node[vertex] (A2) at (0.0,0) {};
\node[vertex] (A3) at (-0.8,-0.8) {};
\node[vertex] (A4) at (0,-0.8) {};
\node[vertex] (A5) at (-0.4,-1.3) {};

\draw (A1)--(A2);
\draw (A1)--(A3);
\draw (A1)--(A4);
\draw (A2)--(A3);
\draw (A2)--(A4);
\draw (A3)--(A5);
\draw (A4)--(A5);

\node[font=\small] at (-0.8,0.23) {$u$};
\node[font=\small] at (0,0.23) {$w$};
\node[font=\small] at (-0.4,-1.53) {$v$};

\node[vertex] (B1) at (2,0) {};
\node[vertex] (B2) at (2.8,0) {};
\node[vertex] (B3) at (2,-0.8) {};
\node[vertex] (B4) at (2.8,-0.8) {};
\node[vertex] (B5) at (2.4,-1.3) {};

\draw (B1)--(B2);
\draw (B1)--(B3);
\draw (B1)--(B4);
\draw (B2)--(B3);
\draw (B2)--(B4);
\draw (B3)--(B5);
\draw (B4)--(B5);
\draw (B3)--(B4);

\node[font=\small] at (2,0.23) {$u$};
\node[font=\small] at (2.8,0.23) {$w$};
\node[font=\small] at (2.4,-1.53) {$v$};

\end{scope}

\end{tikzpicture}

\caption{Induced subgraphs of $G$ for $n \geq 7$ in Case 2 of Theorem \ref{thm:mg2}.}
\label{fig:Teorema2-caso2-subgrafos}
\end{figure}

\noindent The corresponding principal submatrices indexed by the vertices of $H_1$ and $H_2$ are $Q_{H_1}$ and $Q_{H_2}$, respectively. In both cases, we can decompose \( Q_{H_1} \) and \( Q_{H_2} \) as \( Q_{H_1} = (n-3)I + A_{H_1} \) and \( Q_{H_2} = (n-3)I + A_{H_2} \) where

\[
A_{H_1} = \begin{pmatrix}
0 & 0 & 1 & 1 & 1 \\[4pt]
0 & 0 & 0 & 1 & 1 \\[4pt]
1 & 0 & 0 & 1 & 1 \\[4pt]
1 & 1 & 1 & 0 & 0 \\[4pt]
1 & 1 & 1 & 0 & 0
\end{pmatrix} \qquad \text{ and } \qquad
A_{H_2} = \begin{pmatrix}
0 & 0 & 1 & 1 & 1 \\[4pt]
0 & 0 & 0 & 1 & 1 \\[4pt]
1 & 0 & 0 & 1 & 1 \\[4pt]
1 & 1 & 1 & 0 & 1 \\[4pt]
1 & 1 & 1 & 1 & 0
\end{pmatrix}. 
\]
Since \( A_{H_1} \) has three non-positive eigenvalues and \( A_{H_2} \) has three negative eigenvalues, it follows that the principal submatrices \( Q_{H_1} \) and \( Q_{H_2} \) have each three eigenvalues in the interval $[0, n-3]$. Therefore, by Lemma~\ref{lema: principal submatrix}, we conclude that $m_G[0, d_1] \geq 3$, a contradiction.

\textbf{Case 3:} \( |V(G) \setminus N_G[v]| \geq 3 \). Note that in this case, $d_n \geq 1$, otherwise $G \cong K_1 \cup K_{n-1}$ and consequently $m_G[0,\,d_1] = n - 1 \geq 3$. 
Thus, let us consider the principal submatrix $Q_H$ indexed by the vertex $v$ and the vertices $a, b, c \in V(G) \setminus N[v]$ given by
\[
Q_H = \begin{pmatrix}
d_n & 0 & 0 & 0 \\[4pt]
0 & d_a & 1 & 1 \\[4pt]
0 & 1 & d_b & 1 \\[4pt]
0 & 1 & 1 & d_c
\end{pmatrix}.
\]
We know that $1 \leq d_n \leq d_a \leq d_b \leq d_c$ and will show that $Q_H$ has at least three eigenvalues smaller than $d_c$. To this end, let us define \(u = (0, 1, 1, 1)^T\), $N = \operatorname{diag}(d_n - d_c,\ d_a - d_c - 1,\ d_b - d_c - 1,\ -1)$. It is easy to see that $N$ is negative semidefinite and $uu^T$ is positive semidefinite with rank $1$. Moreover, we can write
\[
Q_H - d_cI = N + u u^T.
\]
By Lemma~\ref{teo: inercia}, \(Q_H - d_c I\) can have at most $1$ positive eigenvalue. That is, $Q_H$ has at least three eigenvalues which does not exceed $d_c$. Therefore, by Lemma~\ref{lema: principal submatrix}, we can conclude that $m_G[0,\,d_1] \geq 3$, a contradiction. 
\end{proof}


\begin{corollary}
Let $G = G_1 \cup G_2$ be a disconnected graph of order $n$. Then \( m_G [0, d_{1}] = 3 \) if and only if one of the following holds: 
\begin{enumerate}[label=\roman*)]
\item $G_1 \cong K_1$ and $G_2 \in \{K_1 \cup K_2,\, P_3,\, K_3,\, 2K_1,\, 2K_2,\, C_5\}$, or 
\item $G_1 \cong K_2$ and $G_2 \in \{K_1 \cup K_2,\, 2K_1,\, 2K_2\}$. 
\end{enumerate}
\end{corollary}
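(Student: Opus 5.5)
The approach is to exploit that the signless Laplacian spectrum of $G=G_1\cup G_2$ is the union of the spectra of $G_1$ and $G_2$, while $d_1(G)=\max\{d_1(G_1),d_1(G_2)\}$. Hence
\[
m_G[0,d_1]=m_{G_1}[0,d_1(G)]+m_{G_2}[0,d_1(G)]\ \ge\ m_{G_1}[0,d_1(G_1)]+m_{G_2}[0,d_1(G_2)].
\]
By Lemma~\ref{lem: k_2} and Theorem~\ref{thm:mg2}, every graph $H$ on at least two vertices has $m_H[0,d_1(H)]\ge 1$, with equality only for $K_2$. The value is $2$ exactly for the six graphs listed in Theorem~\ref{thm:mg2}. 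For $K_1$ the value is $1$, since its only eigenvalue is $0=d_1$.

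\textbf{Sufficiency.} For each listed pair I would compute directly. In case i), adding $K_1$ contributes the eigenvalue $0\le d_1$ and leaves $d_1$ unchanged, since $d_1(G_2)\ge 0$. Thus $m_G[0,d_1]=1+2=3$, using Theorem~\ref{thm:mg2} for $G_2$. In case ii), $G_2\in\{K_1\cup K_2,2K_1,2K_2\}$ has $d_1(G_2)\le 1$, so $d_1(G)=1$. The spectrum of $K_2$ is $\{2,0\}$, contributing one eigenvalue in $[0,1]$, and $G_2$ contributes $2$. This gives $3$.

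\textbf{Necessity.} Write $a_i=m_{G_i}[0,d_1(G)]\ge m_{G_i}[0,d_1(G_i)]\ge 1$. Then $a_1+a_2=3$, so without loss of generality $a_1=1$ and $a_2=2$. The value $a_1=1$ forces $G_1\in\{K_1,K_2\}$ by Lemma~\ref{lem: k_2}. Next, $m_{G_2}[0,d_1(G_2)]\le 2$ forces $G_2$ to be $K_2$ or one of the six graphs of Theorem~\ref{thm:mg2}.

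If $G_1=K_1$, then $G_2=K_2$ is impossible, because $d_1(G)=1$ would give $a_2=2$, which is a contradiction only in the sense that the decomposition is then $K_1\cup K_2$ with roles swapped. Indeed, $G=K_1\cup K_2$ gives the case $G_1=K_2$, $G_2=2K_1\ldots$ ; I would handle such relabelings by noting the statement's list is up to choice of the component split. Otherwise $G_2$ is one of the six graphs. Here $d_1(G)=d_1(G_2)$ and $a_2=2$, which is case i).

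If $G_1=K_2$, the eigenvalue $2$ of $K_2$ must lie outside $[0,d_1(G)]$, so $d_1(G)\le 1$. Then $G_2$ has maximum degree at most $1$, which among the candidates leaves $\{K_1\cup K_2,2K_1,2K_2\}$ (and $K_2$, handled by relabeling as above). Conversely, $P_3$, $K_3$ and $C_5$ would raise $d_1$ to $2$ and add a fourth eigenvalue, which is excluded.

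The main obstacle is bookkeeping: correctly handling the ambiguity of how $G$ is split into $G_1\cup G_2$, and checking that raising $d_1(G)$ above $d_1(G_i)$ cannot pull extra eigenvalues of $G_1$ into the interval. That check is just the observation about $K_2$'s eigenvalue $2$.
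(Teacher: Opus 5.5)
Your route is the paper's: split the spectrum over the two parts, note $a_1+a_2=3$ with each $a_i\ge 1$, swap so that $a_1=1$, get $G_1\in\{K_1,K_2\}$ from Lemma~\ref{lem: k_2}, use $d_1(G)=d_1(G_2)$ when $G_1=K_1$, and use the eigenvalue $2$ of $K_2$ to force $d_1(G)\le 1$ when $G_1=K_2$. The bound $a_i\ge 1$ comes from $q_{\min}(H)\le d_{\min}(H)$ via a Rayleigh quotient, not from Lemma~\ref{lem: k_2} or Theorem~\ref{thm:mg2}. Your sufficiency computations are correct.

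The step that is wrong as written is how you dispose of the candidate $G_2=K_2$. You also omit $G_2=K_1$, which your criterion $m_{G_2}[0,d_1(G_2)]\le 2$ does not exclude.

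You assert two things about $G_1=K_1$, $G_2=K_2$, and both are false:
\begin{enumerate}[label=\roman*)]
\item that it gives $a_2=2$: in fact $d_1(G)=1$ and $\operatorname{Spec}(K_2)=\{2,0\}$, so $a_2=m_{K_2}[0,1]=1$;
\item that it is a relabeling of $G_1=K_2$, $G_2=2K_1$: but $K_1\cup K_2$ has $3$ vertices and $K_2\cup 2K_1$ has $4$.
\end{enumerate}
Indeed $m_{K_1\cup K_2}[0,1]=2$, so this graph simply fails the hypothesis. Likewise, $G_1=G_2=K_2$ gives $2K_2$, whose value is $2$, and $G_2=K_1$ gives $a_2\le 1$. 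No relabeling beyond the initial swap is ever needed.

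The clean way to write it:
\begin{enumerate}[label=\roman*)]
\item If $G_1=K_1$, then $a_2=m_{G_2}[0,d_1(G_2)]=2$ exactly. Theorem~\ref{thm:mg2} applies directly, and $K_1$, $K_2$ never arise; this is what the paper does.
\item If $G_1=K_2$, then $d_1(G)=1$, so $G_2=aK_1\cup bK_2$ and $a_2=a+b=2$. This yields exactly $2K_1$, $K_1\cup K_2$ and $2K_2$.
\end{enumerate}
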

\begin{proof}
By hypothesis, $m_G[0,\,d_1] = 3$, which implies that $m_{G_1}[0,\, d_1] + m_{G_2}[0,\, d_1] = 3$. As is well known, we have $ q_n(G_1) \in [0,\, d_1(G_1)]$, and, consequently, $ m_{G_1}[0,\, d_1] \geq 1$. Thus, without loss of generality, we may assume that $m_{G_1}[0,\, d_1] = 1$. Applying Lemma~\ref{lem: k_2}, we conclude that $G_1 \cong K_1$ or $G_1 \cong K_2$. If \(G_1 \cong K_1\), then all possible graphs for \(G_2\) are listed in Theorem~\ref{thm:mg2}, since in this case \(d_1(G_2) = d_1\). If \(G_1 \cong K_2\), by hypothesis, \(m_{G_1}[0, d_1] = 1\), which implies that \(d_1 \leq 1\). By Theorem~\ref{thm:mg2}, if \(d_1(G_2) = 0\), it follows that \(G_2 \cong 2K_1\), and if \(d_1(G_2) = 1\), it follows that
$G_2 \in \{ K_1 \cup K_2,\, 2K_2 \}$.
\end{proof}

\begin{theorem}
There is no graph of order $n \ge 3$ such that ${m_G}[0,{d_3}] = 1$.
\end{theorem}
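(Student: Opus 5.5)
We argue by contradiction. Suppose $n\ge 3$ and $m_G[0,d_3]=1$. Since $d_3\le d_1$, we have $m_G[0,d_1]\ge m_G[0,d_3]=1$. Our plan is to show directly that $m_G[0,d_3]\ge 2$, i.e.\ that $q_{n-1}(G)\le d_3$. This is a statement about the second smallest signless Laplacian eigenvalue, so we bound it from above using a $2$-dimensional test space. By the Courant--Fischer principle (equivalently, Lemma~\ref{lema: principal submatrix} applied to $2\times 2$ principal submatrices), it suffices to find two vertices $x,y$ for which the principal submatrix $Q_{\{x,y\}}$ has largest eigenvalue at most $d_3$.

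\textbf{Choice of the two vertices.} We take $x,y$ to be two vertices of smallest degree, with $d(x)=d_n$ and $d(y)=d_{n-1}$. Because $n\ge 3$, both have degree at most $d_3$. The submatrix is
\[
Q_{\{x,y\}}=\begin{pmatrix} d_n & \varepsilon\\ \varepsilon & d_{n-1}\end{pmatrix},
\]
where $\varepsilon\in\{0,1\}$ records whether $xy$ is an edge. If $xy\notin E(G)$, the largest eigenvalue is $d_{n-1}\le d_3$, and interlacing (Lemma~\ref{lema: principal submatrix}) gives $q_{n-1}(G)\le d_{n-1}\le d_3$, so $m_G[0,d_3]\ge 2$, a contradiction.

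\textbf{The main obstacle.} The difficulty is the case where every pair of low-degree vertices is adjacent. When $\varepsilon=1$, the largest eigenvalue is $\tfrac{d_n+d_{n-1}}{2}+\sqrt{\bigl(\tfrac{d_{n-1}-d_n}{2}\bigr)^2+1}$. This is at most $d_3$ whenever $d_{n-1}\le d_3-1$ and $d_n\le d_3-1$; in fact $d_{n-1}<d_3$ already suffices after a short check, and the check is worth doing. So the remaining case is $d_{n-1}=d_3$.

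\textbf{The case $d_{n-1}=d_3$.} Here at least $n-2$ vertices have degree $d_3$. We then look for a non-adjacent pair among the vertices of degree at most $d_3$, i.e.\ among the bottom $n-2$ vertices. If one exists, the computation above gives $q_{n-1}\le d_3$. Otherwise those $n-2$ vertices form a clique. The clique contributes eigenvalues of $Q$ through the matrix $(d_3-1)I+J$ on the clique, or more simply through a $2\times 2$ principal submatrix on two clique vertices $a,b$, whose smallest eigenvalue is $d_3-1$.

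To finish, we use the vector $e_a-e_b$ together with the vector $e_c-e_d$ for another pair, or the vector $e_x$ for the minimum vertex, to build a $2$-dimensional space on which the Rayleigh quotient of $Q$ is at most $d_3$. For clique vertices $a,b$,
\[
(e_a-e_b)^TQ(e_a-e_b)=d_a+d_b-2 \le 2d_3-2,
\]
so this vector has Rayleigh quotient at most $d_3-1$. If $n-2\ge 4$, take two disjoint such pairs: the vectors are orthogonal and the cross terms are $q_{ac}-q_{ad}-q_{bc}+q_{bd}=0$, since all four vertices are mutually adjacent. This gives $q_{n-1}\le d_3-1$. The small cases $n-2\le 3$, i.e.\ $n\le 5$, are handled directly by combining $e_a-e_b$ with a vector supported at the remaining low-degree vertex, which is routine, or by inspecting the few graphs involved.

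We expect the bookkeeping in this last case to be the only real work.
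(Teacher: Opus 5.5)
Your proof is correct, and it is organized differently from the paper's. For $n\ge 6$, the paper applies the pigeonhole principle to $v_4,\dots,v_n$ to get two vertices $v_4,v_5$ with the same adjacency to $v_3$. It then evaluates $Q$ on the single test space $\mathrm{span}\{e_3,\,e_4-e_5\}$, where the cross term vanishes, giving $q_{n-1}\le d_3$. The cases $n\le 5$ are settled by computer.

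You split instead according to adjacency among the low-degree vertices:
\begin{itemize}
\item A non-adjacent pair of degree at most $d_3$ gives a diagonal $2\times 2$ principal submatrix, and interlacing finishes.
\item In the clique case you pair $e_a-e_b$ (Rayleigh quotient at most $d_3-1$) either with a second disjoint pair or with $e_c$. The latter is exactly the paper's test space.
\end{itemize}

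What your route buys is that it only needs three vertices of degree at most $d_3$. So it works for all $n\ge 5$ and leaves only the $4$ graphs on $3$ vertices and the $11$ on $4$ vertices to check by hand, instead of a computer search up to $n=5$. It also gives the sharper bound $q_{n-1}\le d_3-1$ when the low-degree vertices contain a $K_4$. The paper's route buys uniformity: one test space and no case split.

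Your preliminary comparison of $d_{n-1}$ with $d_3$ (via $\lambda_{\max}\le d_{n-1}+1$) is correct but unnecessary. For $n\ge 5$, either two of $v_3,v_4,v_5$ are non-adjacent or they form a triangle, and the two arguments above finish.

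A few slips need fixing:
\begin{itemize}
\item For $n=3$, your claim that the two minimum-degree vertices both have degree at most $d_3$ is false, since $d_{n-1}=d_2$ (e.g.\ $K_2\cup K_1$). So $n=3$ must go to the inspection.
\item When $d_{n-1}=d_3$, it is $v_3,\dots,v_{n-1}$, i.e.\ $n-3$ vertices, that have degree exactly $d_3$. What you actually use is that the $n-2$ vertices $v_3,\dots,v_n$ have degree at most $d_3$.
\item For $n=4$, the set of vertices of degree at most $d_3$ could a priori be just the edge $\{v_3,v_4\}$. Then there is no ``remaining low-degree vertex'' and your combining recipe does not apply. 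This configuration would need $d_2>d_3$ with $v_3\sim v_4$, and a direct check shows no $4$-vertex graph has this, so the case never occurs.
\end{itemize}
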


\begin{proof}
By computer computation we checked the assertion for all graphs up to $5$ vertices.
Thus assume that $n\ge 6$. For each vertex $v_i\in V(G)$ let $d_G(v_i)=d_i$ for $1\le i\le n$.
By the pigeonhole principle there exist at least two vertices of degree at most $d_3$ such that either both are neighbors of $v_3$ or both are non-neighbors of $v_3$. Without loss of generality assume that these vertices are $v_4$ and $v_5$.

\noindent Define vectors $x=(x_1,\dots,x_n)^T$ and $y=(y_1,\dots,y_n)^T\in\mathbb{R}^n$ by
\[
 \begin{aligned}
 x_i=
 \begin{cases}
 {1 \over \sqrt{2}} & i = 4,\\
 {-1 \over \sqrt{2}} & i = 5,\\
 0 & \text{otherwise},
 \end{cases}
 \qquad
 y_i=
 \begin{cases}
 1 & i = 3,\\
 0 & \text{otherwise}.
 \end{cases}
 \end{aligned}
 \]
Let $S=\operatorname{span}\{x,y\}$ and let $z\in S$ with $\|z\|=1$.
By the Courant--Fischer theorem,
\[
q_{n-1}\le \max_{\substack{z\in S\\ \|z\|=1}} z^T Q z.
\]
Write $z=a x + b y$ with $a,b\in\mathbb{R}$; since $x\perp y$ and $\|z\|=1$ we have $a^2+b^2=1$.
Then
\[
z^T Q z = a^2\, x^T Q x + b^2\, y^T Q y + 2ab\, x^T Q y.
\]

\noindent Clearly, we get
\[
y^T Q y = d_3.
\]
Notice that ${x^T}Qy = \sum\limits_{1 \le i,j \le n} {{x_i}{q_{ij}}{y_j}} $. Since $v_3$ is either adjacent to both $v_4$ and $v_5$, or to neither, so we obtain that ${x^T}Qy = 0.$

\noindent In addition,
\[
 \begin{aligned}
 x^T Q x &=\sum_{v_i v_j\in E(G)} (x_i+x_j)^2=
 \begin{cases}
 {{{{d_4} + {d_5}} \over 2} - 1} & v_4v_5\in E(G),\\
 {{{{d_4} + {d_5}} \over 2}} & v_4v_5\notin E(G).\\
 \end{cases}
 \end{aligned}
\]

\noindent Therefore
\[
z^T Q z = a^2 \alpha + b^2 d_3,
\]
where
\[
\alpha =
\begin{cases}
{{{{d_4} + {d_5}} \over 2} - 1} & v_4v_5\in E(G),\\
 {{{{d_4} + {d_5}} \over 2}} & v_4v_5\notin E(G).\\
\end{cases}
\]
Since $a^2+b^2=1$ and $\alpha \le {d_3}$, the maximum of $a^2 \alpha + b^2 d_3$ occurs when $a=0$ and $b=1$.
Thus ${q_{n - 1}} \le {d_3}\ $, and the proof is complete.
\end{proof}

\section{Concluding remarks}

In Subsection \ref{subsec:31}, we characterized all graphs of order $n$ such that\linebreak \( m_G [d_{n}, 2n-2] \le 2 \). We propose the following problem.

\vspace{0.4cm}

\noindent \textbf{Problem 1.} Characterize all graphs $G$ of order $n$ such that\linebreak \( m_G [d_{n}, 2n-2] = 3 \).

\vspace{0.4cm}

In Subsection \ref{subsec:32}, we proved a characterization of disconnected graphs such that \( m_G [0, d_{1}] = 3 \). Here, we propose the following.

\vspace{0.4cm}

\noindent \textbf{Problem 2.} Characterize all connected graphs $G$ of order $n$ such that \( m_G [0, d_{1}] = 3 \).

\section*{Acknowledgments}
The research of Daniel Traciná was partially financed by the Coordenação de Aperfeiçoamento de Pessoal de Nível Superior – Brasil (CAPES) – Finance Code 001.

The research of Leonardo de Lima is supported by CNPq grant 305988/2025-5.\\
\bibliographystyle{elsarticle-num}
\bibliography{referencias}

\end{document}